\newtheorem{theorem}{Theorem}[section]
\newtheorem{lemma}[theorem]{Lemma}
\newtheorem{corollary}[theorem]{Corollary} 
\newtheorem{proposition}[theorem]{Proposition} 
\theoremstyle{definition}
\theoremstyle{remark}
\newtheorem{remark}[theorem]{Remark}
\numberwithin{equation}{section}
\newcommand*{\indbr}[1]{1_{\{#1\}}} 
\DeclareMathOperator{\EE}{\mathbb{E}} 
\newcommand*{\RR}{\mathbb{R}} 
\newcommand*{\eps}{\varepsilon}
\DeclareMathOperator{\Ent}{Ent}	
\DeclareMathOperator{\Med}{Med}	
\DeclareMathOperator{\Var}{Var}	
\DeclareMathOperator{\supp}{Supp} 
\newcommand*{\calTbar}{\overline{\mathcal{T}}}
\newcommand*{\Tbarbf}{\overline{\mathbf{T}}\vphantom{\mathbf{T}}}
\newcommand*{\vphi}{\varphi}
\begin{document}

\title[Convex Log-Sobolev Inequalities]{A characterization of a class of convex Log-Sobolev inequalities on the real line}

\author{Yan Shu}
\address{Universit\'e Paris Ouest Nanterre La D\'efense - Modal'X, 200 avenue de la R\'epublique, 92000 Nanterre, France.}
\email{yshu@u-paris10.fr}
\thanks{}

\author{Micha{\l} Strzelecki}
\address{Institute of Mathematics, University of Warsaw, Banacha 2, 02--097 Warsaw, Poland.}
\email{M.Strzelecki@mimuw.edu.pl}
\thanks{Research partially
supported by the National Science Centre, Poland, grant no. 2015/19/N/ST1/00891 (M.St.).}

\subjclass[2010]{Primary:
60E15. 
Secondary:
26A51, 
26D10. 
}
\date{February 15, 2017.}
\keywords{Concentration of measure, convex functions, log-Sobolev inequality, weak transport-entropy inequalities.}

\begin{abstract}
We give a sufficient and necessary condition for a probability measure $\mu$ on the real line to satisfy the logarithmic Sobolev inequality for convex functions. The condition is expressed in terms of the  unique left-continuous and non-decreasing map transporting the symmetric exponential measure onto $\mu$. The main tool in the proof is the theory  of weak transport costs.

As a consequence,  we obtain dimension-free concentration bounds for the lower and upper tails of convex functions of independent random variables which satisfy the convex log-Sobolev inequality.
\end{abstract}

\maketitle

\section{Introduction and main results}

Let $\mu$ be a Borel probability measure on $\RR^n$. We say that $\mu$ satisfies the (modified) log-Sobolev inequality for a class of functions $\mathcal{F}$ (with cost function $H:\RR^n\to[0,\infty)$ and constant $c<\infty$) if for every $f\in\mathcal{F}$ we have
\begin{equation}\label{eq:mLSI-abstract}
 \Ent_{\mu}(e^f) \leq \int_{\RR^n} H(c\nabla f) e^f d\mu.
\end{equation}
Here $\Ent_{\mu}(g)$ denotes the usual entropy of a non-negative function $g$, i.e.
\begin{equation}\label{eq:mLSI-abstract-1}
 \Ent_{\mu}(g) = \int_{\RR^n} g \ln(g) d\mu - \int_{\RR^n} g d\mu \ln\Bigl(\int_{\RR^n} g d\mu \Bigr)
\end{equation}
if $\int_{\RR^n} g \ln(g) d\mu<\infty$ and $\Ent_{\mu}(g) =\infty$ otherwise.

In the most classical setting where $H(x) = |x|^2$ and $\mathcal{F}$ is the class of $C^1$ functions this inequality was first introduced by Gross \cite{G75}. In this case it can be rewritten in the form
\begin{equation}\label{eq:mLSI-abstract-2}
 \Ent_{\mu}(g^2) \leq 4C \int_{\RR^n} |\nabla g|^2 d\mu,
\end{equation}
where $C= c^2$, or in yet another form which states that the entropy of a non-negative function $g$ is bounded by its Fisher information:
\begin{equation*}
 \Ent_{\mu}(g) \leq C \int_{\RR^n} \frac{|\nabla g|^2}{g} d\mu.
\end{equation*}

Due to its tensorization property the log-Sobolev inequality is a~powerful tool and can be used to obtain dimension-free concentration bounds (via the Herbst argument). It has been investigated also in more general settings of Riemannian manifolds and in context of applications to the study of Markov chains, see e.g. the monographs~\cite{ABC+,BakryGLbook} and the expository article~\cite{diaconis}.

Inequality~\eqref{eq:mLSI-abstract} with $\mathcal{F}=C^1$ and non-quadratic functions $H$ has been studied by Bobkov and Ledoux \cite{BL}, who considered
\begin{equation*}
H(x) = |x|^2 \indbr{|x|\leq\delta} +\infty \indbr{|x|\geq \delta}, \quad x\in\RR^n,
\end{equation*}
and Gentil, Guillin, and Miclo \cite{MR2198019, MR2351133}, with $H$ essentially of the form
\begin{equation*}
\sum_{i=1}^{n}\max\{|x_i|^2, |x_i|^{p} \},\quad  x=(x_1,\ldots,x_n)\in\RR^n, \ p\geq 2.
\end{equation*}
 Also, in the case of modified log-Sobolev inequalities on the real line, Barthe and Roberto \cite{MR2430612}  characterized (under some mild technical conditions) measures satisfying inequality~\eqref{eq:mLSI-abstract} with
\begin{equation*}
H(x) = x^2\indbr{|x|\leq 1} + \frac{\Phi(|x|)}{\Phi(1)} \indbr{|x|>1}, \quad x\in\RR,
\end{equation*}
 where $\Phi:[0,\infty)\to[0,\infty)$ is a sufficiently nice Young function (cf.~\cite{MR2052235}; the quadratic case was handled earlier by Bobkov and G\"otze~\cite{Dual}).

In this paper we are primarily interested in the case when $\mathcal{F}$ is the class of \emph{convex} functions.\footnote{Note that in this case the dimension-free tensorization property still holds, but the alternative formulations~\eqref{eq:mLSI-abstract-1} and~\eqref{eq:mLSI-abstract-2} --- with $g$ being convex, respectively convex and non-negative --- are no longer equivalent to~\eqref{eq:mLSI-abstract}.} The restriction of the class of functions allows us to work with measures which satisfy much weaker regularity conditions. Most importantly, their supports do not need to be connected, which is not the case for many classical functional inequalities such as the Poincar\'e, transport-information or log-Sobolev inequality. On the other hand, a~disturbing issue arises: the log-Sobolev inequality for convex functions yields via standard reasonings  only deviation inequalities for the upper tail of the functions, i.e.
\begin{equation*}
\mu^{\otimes N}\bigl( \{x\in(\RR^n)^N : f(x) \geq \int_{(\RR^n)^N}\! f\  d\mu^{\otimes N}  + t \}\bigr), \quad t\geq 0
\end{equation*}
(in the classical setting of smooth functions one obtains bounds on the lower tail simply by working with $-f$ instead of $f$, but this is precluded in our situation because $-f$ is usually not convex).

For brevity in what follows we shall slightly informally refer to the log-Sobolev inequality for convex functions simply as ``the convex log-Sobolev inequality''.

Our goal is to give an intrinsic characterization of probability measures on the real line for which the convex log-Sobolev inequality holds. As a corollary we will obtain dimension-free concentration bounds for upper \emph{and} lower tails of convex functions of independent random variables satisfying the convex log-Sobolev inequality. Before stating our main result let us recapitulate what is known in the convex setting.

In \cite{MR2163396} Adamczak found a sufficient condition for a probability measure on the real line to satisfy the convex log-Sobolev inequality with $H(x) =x^2$, $x\in\RR$. This has been extended to functions of the form $H(x) = \max\{x^2, x^{(\beta+1)/\beta}\}$, where $\beta\in[0,1]$, by Adamczak and the second named author \cite{MR3456588}.

Recall that in the classical setting there are strong links between the log-Sobolev inequality, transport-entropy inequalities (introduced by Talagrand~\cite{Tal} and subsequently widely studied, see e.g. \cite{Villani} for a complete and detailed introduction) and the infimum convolution inequality (first introduced by Maurey \cite{Mau91}). Specifically, Bobkov and G\"otze \cite{Dual} showed that the transport-entropy inequality is equivalent to some kind of infimum-convolution inequality, and later Bobkov, Gentil, and Ledoux \cite{BGL01} established a condition  equivalent to the log-Sobolev inequality in terms of the infimum-convolution inequality (these results are closely connected with the celebrated Otto-Villani theorem \cite{OV00, MR1846021}).

Similar connections have been observed by Gozlan, Roberto, Samson, and Tetali in~\cite{GRST14}  for convex log-Sobolev inequalities, \emph{weak} transport-entropy inequalities (introduced therein, see Section~\ref{sec:prelim} for definitions and more details), and the convex infimum convolution inequality. Later, Gozlan, Roberto, Samson, Tetali, and the first named author \cite{GRSST15} established a condition sufficient for the convex modified log-Sobolev inequality to hold on the real line. This condition is expressed in terms of the  unique left-continuous and non-decreasing map transporting the symmetric exponential measure onto $\mu$ (more precisely, in terms of the quantity $ \Delta_\mu(h)$ defined below in~\eqref{eq:delta-h-def}; see Section~\ref{sec:tr-U} for a precise statement of the result). In fact, their sufficient condition is weaker than the condition considered in \cite{MR3456588} (and leads to a result formally stronger than the convex log-Sobolev inequality, cf. Proposition~\ref{prop:tr-U-GRSST15}).

On the other hand, it follows from \cite{GRSST15} and the independent work of Feldheim, Marsiglietti, Nayar, and Wang \cite{FMNW15} that in the case when $H$ is quadratic on an interval near zero and then infinite the following are equivalent:
\begin{itemize}
 \item the condition on the tail of the measure $\mu$ from \cite{MR3456588} in the case $\beta=0$,
 \item the condition on monotone transport map obtained in \cite{GRSST15},
 \item the log-Sobolev inequality for convex functions
\end{itemize}
(and further: the convex Poincar\'e inequality, the convex infimum convolution inequality with a quadratic-linear cost function). In what follows we extend this result to more general choices of the function $H$.

In order to formulate our main result we need to introduce some notation. Let $\tau$ be the symmetric exponential measure on $\RR$ with density $\frac{1}{2}e^{-|x|}$. For a Borel probability measure $\mu$ on $\RR$ we denote by $U_\mu$ the unique left-continuous and non-decreasing map transporting $\tau$ onto the reference measure $\mu$. More precisely,
\begin{equation*}
U_{\mu}(x) = F_{\mu}^{-1}\circ F_{\tau} (x) =
\begin{cases}
F_{\mu}^{-1}(\frac{1}{2} e^{-|x|} ) & \text{if } x< 0,\\
F_{\mu}^{-1}(1-\frac{1}{2} e^{-|x|} ) & \text{if } x\geq 0,
\end{cases}
\end{equation*}
where
\begin{equation*}
F_{\mu}^{-1} (t)= \inf \{y\in\RR : F_{\mu}(y) \geq t \} \in\RR\cup\{\pm\infty\}, \quad t\in[0,1],
\end{equation*}
 is the generalized inverse of the cumulative distribution function defined as
 \begin{equation*}
 F_{\mu}(x) = \mu((-\infty, x]), \qquad x\in\RR.
 \end{equation*}
 Denote moreover by
\begin{equation}\label{eq:delta-h-def}
 \Delta_\mu(h) = \sup_{x\in \RR} \{U_{\mu}(x+h) - U_{\mu}(x) \}, \quad h>0,
\end{equation}
the modulus of continuity of $U_{\mu}$ (by all means we can have $\lim_{h\to 0^+} \Delta_\mu(h)>0$).

Our main result is the following. The results are new already in the case of the quadratic function $H(x)=\tfrac{1}{4}x^2$.

\begin{theorem}\label{thm:main}
Let $H:\RR\to[0,\infty)$ be a symmetric convex function, such that $H(x) = \tfrac{1}{4}x^2$ for $x\in[-2t_0,2t_0]$ for some $t_0>0$, and let $H^*:\RR\to[0,\infty)$ be its Fenchel-Legendre transform. Suppose moreover that $\lim_{x\to\infty} H^*(x)/x = \infty$ and that there exist $A\in[1,\infty)$ and $\alpha\in(1,2]$ such that
\begin{equation}\label{eq:H-scaling-intro}
\forall_{x\in\RR} \ \forall_{s\in[0,1]} \quad H(sx) \leq A s^{\alpha} H(x).
\end{equation}
Denote $\theta(t) = H^*(t)$, $t\geq 0$. For a probability measure $\mu$ on the real line the following conditions are equivalent.
\begin{enumerate}[(i)]
   \item
      For every $s>0$ we have $\int_{\RR} e^{s|x|} d\mu(x)<\infty$ and there exists $c>0$ such that
      \begin{equation*}
      \Ent_{\mu}(e^{\varphi}) \leq \int_{\RR} H(c \varphi')e^{\varphi} d\mu
      \end{equation*}
      for every smooth convex Lipschitz function $\varphi:\RR\to\RR$.
   \item
      There exists $b>0$ such that for all $h>0$,
      \begin{equation*}
      \Delta_\mu(h)\leq \frac{1}{b}\theta^{-1}(t_0^2 + h).
      \end{equation*}
\end{enumerate}
\end{theorem}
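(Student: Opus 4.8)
The plan is to prove Theorem~\ref{thm:main} by passing through the equivalent language of weak transport-entropy inequalities, which is the natural common ground for both conditions. The key intermediate object is the convex infimum-convolution/weak transport inequality with cost built from $\theta$: by the duality theory of Gozlan--Roberto--Samson--Tetali recalled in Section~\ref{sec:prelim}, condition (i) --- the convex log-Sobolev inequality with cost function $H$ --- should be equivalent to a statement of the form ``$\mu$ satisfies the weak transport-entropy inequality $\overline{\mathcal{T}}_{\theta_c}(\mu,\nu)\le \Ent(\nu\mid\mu)$ (or the two-sided version $\mathbf{T}$) for a suitable rescaling $\theta_c$ of $\theta$''. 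The first block of the proof is therefore to quote or adapt the dual characterization from \cite{GRST14,GRSST15}: the convex log-Sobolev inequality \eqref{eq:mLSI-abstract} with cost $H$ is equivalent to a weak transport inequality whose cost is essentially $H^*=\theta$, up to the universal scaling~\eqref{eq:H-scaling-intro} that controls how constants propagate. This is where Proposition~\ref{prop:tr-U-GRSST15} and the surrounding machinery of Section~\ref{sec:tr-U} enter.

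The second block is the one-dimensional reduction: a weak transport-entropy inequality on $\RR$ with a convex cost is controlled by the modulus of continuity $\Delta_\mu$ of the monotone transport map $U_\mu$. Concretely, one transports everything to the exponential side via $U_\mu$, writes $\overline{\mathcal{T}}_{\theta}(\mu,\nu)$ in terms of the increments $U_\mu(x+h)-U_\mu(x)$, and observes that the worst case over all $\nu$ is governed precisely by $\sup_x\{U_\mu(x+h)-U_\mu(x)\}=\Delta_\mu(h)$ against the cost $\theta$ evaluated at displacements of size $h$ under $\tau$. Because $\tau$ itself satisfies a clean weak transport/log-Sobolev-type inequality (the symmetric exponential is the model case, with linear-in-$h$ transport cost on the relevant scale), the inequality for $\mu$ reduces to the pointwise comparison $\theta\bigl(b\,\Delta_\mu(h)\bigr)\le t_0^2+h$ for all $h>0$, which upon inverting $\theta$ (legitimate since $\theta$ is increasing and, by $\lim_{x\to\infty}H^*(x)/x=\infty$ together with the quadratic behaviour of $H$ near $0$, a bijection of $[0,\infty)$ with the right range) is exactly condition (ii). The shift by $t_0^2$ is the signature of the fact that $H$ is quadratic on $[-2t_0,2t_0]$: on that range the transport cost is genuinely quadratic rather than $\theta$-type, and $\theta^{-1}(t_0^2)$ measures the ``free'' displacement allowed before the cost $\theta$ starts to bite.

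The main obstacle I anticipate is making the two reductions quantitatively tight simultaneously, i.e.\ tracking the constants $c$ and $b$ through the chain log-Sobolev $\Rightarrow$ weak transport $\Rightarrow$ pointwise bound on $\Delta_\mu$ and back, so that the existence of \emph{some} $c>0$ in (i) is matched by the existence of \emph{some} $b>0$ in (ii) without circular loss. The scaling hypothesis~\eqref{eq:H-scaling-intro} with exponent $\alpha\in(1,2]$ is exactly what is needed here: it guarantees that $\theta=H^*$ enjoys a reverse scaling (roughly $\theta(\lambda t)\ge c_\alpha \lambda^{\alpha/(\alpha-1)}\theta(t)$ for $\lambda\ge1$), so that changing $c$ to $\lambda c$ changes the admissible $b$ only by a fixed power of $\lambda$ and vice versa; without such homogeneity control the equivalence of the two ``there exists a constant'' statements could fail. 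A secondary technical point is the integrability clause $\int e^{s|x|}\,d\mu<\infty$ for all $s>0$ in (i): one must check that (ii) forces $\Delta_\mu(h)$ to grow sub-linearly relative to the exponential scale (since $\theta^{-1}$ grows sublinearly, as $\theta$ is superlinear), which is precisely what makes $U_\mu$ have at most linear growth and hence $\mu$ have exponential moments of all orders; conversely this integrability is needed to justify the Herbst-type arguments and the finiteness of entropy in (i). I would handle the easy implication (i)$\Rightarrow$(ii) first by testing the log-Sobolev inequality against explicit exponential-type functions $\varphi$ concentrated near the point realizing a near-supremum in $\Delta_\mu(h)$, extracting the bound on $\Delta_\mu(h)$ directly; the reverse implication (ii)$\Rightarrow$(i) then goes through the weak transport inequality for $\tau$ pushed forward by $U_\mu$, combined with the duality of Section~\ref{sec:prelim}.
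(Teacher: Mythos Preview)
Your overall strategy---routing both implications through weak transport-entropy inequalities---is correct and is the paper's approach. But you have the relative difficulty of the two directions backwards, and your plan glosses over the one genuinely new step.

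The implication (ii)$\Rightarrow$(i) is the one already available from prior work: condition (ii) on $\Delta_\mu$ yields the full two-sided $\Tbarbf(\theta(a\cdot))$ by Proposition~\ref{prop:tr-U-GRSST15} (this is \cite{GRSST15}), hence in particular $\Tbarbf^-(\theta(a\cdot))$, which gives the convex log-Sobolev inequality by Proposition~\ref{prop:tr-LSI}. The new content of the paper is (i)$\Rightarrow$(ii), and here your sketch has a gap. From the convex log-Sobolev inequality one obtains, via a Hamilton--Jacobi argument where the scaling hypothesis~\eqref{eq:H-scaling-intro} is used to control $H(ck(t)\cdot)$, only the \emph{one-sided} inequality $\Tbarbf^-(\theta(a\cdot))$ (Proposition~\ref{prop:tr-LSI}). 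You then invoke Proposition~\ref{prop:tr-U-GRSST15}, but that proposition needs the \emph{two-sided} $\Tbarbf$ as input; a priori there is no way to go from $\Tbarbf^-$ to $\Tbarbf$. The paper's main technical contribution is precisely Proposition~\ref{prop:tr-U}, which shows that $\Tbarbf^-$ alone already forces the bound on $\Delta_\mu$. Its proof is not a matter of ``transporting everything to the exponential side and reading off increments'' as you describe: one first extracts from $\Tbarbf^-$ the convex Poincar\'e inequality, uses it to obtain a preliminary linear bound $\Delta_\mu(h)\le D_1+D_2 h$ (Lemmas~\ref{lem2-attempt2} and~\ref{lem:const-FMNW15}), and then tests the dual formulation~\eqref{eq:Tminus-dual} against the specific convex functions $t\mapsto\theta(a[t-x]_+)$, computing the infimum convolution explicitly and feeding the result into Lemma~\ref{lem1-attempt2} to upgrade to the $\theta$-type bound. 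The integrability check $\int_0^\infty\theta(d_1 x+d_2)e^{-x}dx<\infty$ that this lemma requires is where the scaling hypothesis~\eqref{eq:H-scaling-intro} enters on the $\theta$ side, via the dual bound $\theta(z)\lesssim z^{\alpha/(\alpha-1)}$.

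Your alternative suggestion---proving (i)$\Rightarrow$(ii) by testing the log-Sobolev inequality directly with exponential-type functions concentrated near a point realizing the supremum in $\Delta_\mu(h)$---is not the paper's route and is not obviously viable: the entropy inequality controls integrated quantities against $\mu$, and it is unclear how to isolate a single pointwise increment $U_\mu(x+h)-U_\mu(x)$ without first passing through the infimum-convolution dual.
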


\begin{remark}\label{rem:dep-const-intro}
The dependence of constants is explicit but complicated and hence we shall only specify it throughout the proof of the theorem. However, in the case when $H(x)=\frac{x^2}{4}$ the dependence of constants can be simplified:  (ii) implies (i) with $c=\frac{1}{\kappa b}$ where $\kappa=\frac{\min(1,t_0)}{210\theta^{-1}(2+t_0^2)}$; (i) implies that $\Delta_\mu(h)\leq 16c(\frac{2}{3}+\sqrt{h/2})$.
\end{remark}

\begin{remark}\label{rem:Hp-intro}
The condition~\eqref{eq:H-scaling-intro} is stable under taking sums or maxima of functions and one can easily check that, for $1<p<\infty$, the function
\begin{equation*}
H(x) = H_p(x)=
  \begin{cases}
    \frac{1}{4}x^2 &\text{if } |x|\leq 2,\\
    \frac{2}{p} (|x/2|^p -1) + 1 &\text{if } |x|> 2,
  \end{cases}
\end{equation*}
satisfies~\eqref{eq:H-scaling-intro} with $\alpha = \min\{p,2\}$ and some $A=A(p)$.
\end{remark}

\begin{remark}\label{rem:equiv-ii}
The condition on $\Delta_\mu$ from (ii) is to some extent related to the conditions considered in~\cite{MR2163396, MR3456588, FMNW15} (see also Lemma~\ref{lem:const-FMNW15} below). Indeed,
 suppose for simplicity that $\mu$ is symmetric and has a nowhere vanishing density. By the definition of $U_\mu$ we have
\begin{equation*}
\mu([U_{\mu}(x+h),\infty)) = \tau([x+h,\infty)) = e^{-h} \tau([x+h,\infty)) = e^{-h}\mu([U_{\mu}(x),\infty))
\end{equation*}
for $x, h\geq 0$. This easy computation shows that (ii) implies:
\begin{enumerate}
   \item[(ii')] There exists $b>0$ such that for every $h>0$,
\begin{equation*}
\mu([x+g(h),\infty))\leq e^{-h} \mu([x,\infty))\ \quad \forall x\geq 0,
\end{equation*}
where $g(h)=\frac{1}{b}\theta^{-1}(t_0^2 + h)$.
\end{enumerate}
The last inequality is similar to the inequalities which appear in the definitions of the classes $\mathcal{M}_{\beta}$ from \cite{MR2163396}, \cite{MR3456588}, and \cite{FMNW15} (but there $h$ is fixed and $x$ is assumed to be sufficiently large; moreover $g=g(x)$ in \cite{MR2163396}, \cite{MR3456588}).
\end{remark}

\begin{remark} In (i) the assumption about exponential integrability is added in order to exclude very heavy-tailed measures for which the only exponentially integrable convex Lipschitz functions are constants and hence the convex log-Sobolev is  trivially satisfied, whereas (ii) cannot hold.
\end{remark}

\begin{remark} In the definition of the log-Sobolev inequality the constant $c$ is introduced as a scaling of the argument of the function $H$ rather than as a multiplicative constant outside of the integral. We decided to use this form because it simplifies some of the calculations in Section~\ref{sec:LSI-tr}. Clearly, in the most common cases, e.g. in the case of the functions $H_p$ from Remark~\ref{rem:Hp-intro}, the two formulations are equivalent up to numerical constants (for the functions $H_p$ those constants depend on $p$).
\end{remark}

Since the condition (ii) is in fact equivalent to an infimum convolution inequality for convex functions \cite{GRSST15} (see also Proposition~\ref{prop:tr-U-GRSST15} and Proposition~\ref{prop:weak-dual} below) we immediately obtain concentration bounds for the upper and lower tails of convex Lipschitz functions. For simplicity we state them in the case $H(x)=\tfrac{1}{4}x^2$, but one can obtain appropriate results also in the case when $H$ is not quadratic. We refer to e.g. \cite[Corollary~3]{FMNW15} for a similar statement (with $H$  quadratic on an interval near zero and than infinite) and to \cite[Corollary 5.11]{GRST14} for a general overview of concentration properties implied by weak transportation inequalities.

\begin{corollary}\label{cor:concentration-x^2}
 Let $\mu$ be a probability measure on $\RR$ such that for every $s>0$ we have $\int_{\RR} e^{s|x|} d\mu(x)<\infty$ and
\begin{equation*}
\Ent_{\mu}(e^{\varphi}) \leq C \int_{\RR} |\varphi'|^2 e^{\varphi} d\mu
\end{equation*}
for every smooth convex Lipschitz function $\varphi:\RR\to\RR$.
Then there exist $A, B <\infty$ (depending only on $C$), such that for any convex (or concave) function $\varphi:\RR^N\to\RR$ which is $1$-Lipschitz (with respect to the Euclidean norm on $\RR^N$) we have
\begin{equation*}
 \mu^{\otimes N}\bigl( \{x\in\RR^N : |\varphi(x) - \Med_{\mu^{\otimes N}}(\varphi)| \geq t \}\bigr) \leq B e^{-t^2/A}, \quad t\geq 0.
\end{equation*}
\end{corollary}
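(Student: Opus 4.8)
The plan is to deduce the two-sided sub-Gaussian concentration from the equivalence (i)$\Leftrightarrow$(ii) of Theorem~\ref{thm:main} together with the dictionary between condition (ii), weak transport-entropy inequalities, and infimum-convolution inequalities for convex functions that is recalled in Section~\ref{sec:prelim} (Propositions~\ref{prop:tr-U-GRSST15} and~\ref{prop:weak-dual}). Concretely, the hypothesis is exactly condition (i) of Theorem~\ref{thm:main} in the quadratic case $H(x)=\tfrac14 x^2$ (with $t_0$ arbitrary, say $t_0=1$, absorbed into constants). Hence (ii) holds, and by Remark~\ref{rem:dep-const-intro} we get $\Delta_\mu(h)\leq 16c(\tfrac23+\sqrt{h/2})$, i.e. $\Delta_\mu(h)\le c'(1+\sqrt h)$ for a constant $c'$ depending only on $C$. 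This is precisely the shape of hypothesis that, via the results of \cite{GRSST15,GRST14}, yields a weak transport-entropy inequality $\mathcal{T}_2$ (with a quadratic-linear cost) for $\mu$, in both directions $\mu\mapsto\nu$ and $\nu\mapsto\mu$.

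The key point is the tensorization of weak transport-entropy inequalities (established in \cite{GRST14}): if $\mu$ satisfies the weak transport inequality with a given cost on $\RR$, then $\mu^{\otimes N}$ satisfies the analogous inequality on $\RR^N$ with the \emph{same} constants, the cost on $\RR^N$ being built coordinatewise from the one-dimensional cost. From this dimension-free transport inequality one obtains, by the convex/concave Marton-type argument (see \cite[Corollary~5.11]{GRST14}), dimension-free concentration for the lower tail of convex $1$-Lipschitz functions and for the upper tail of concave $1$-Lipschitz functions around the median. Since $\varphi$ is convex and $1$-Lipschitz, $-\varphi$ is concave and $1$-Lipschitz, so combining the two one-sided bounds for $\varphi$ and for $-\varphi$ yields control of $\mu^{\otimes N}(|\varphi-\Med(\varphi)|\ge t)$; the quadratic-linear cost produces a bound of the form $Be^{-t^2/A}$ for $t$ in a bounded range and $Be^{-t/A'}$ for large $t$, but since $\varphi$ is $1$-Lipschitz and we are on $\RR^N$ with the Euclidean norm, the relevant cost term that is activated by a $1$-Lipschitz test function is the quadratic one, so the pure sub-Gaussian bound $Be^{-t^2/A}$ survives for all $t\ge 0$ after adjusting constants.

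The step I expect to be the main obstacle — or at least the one requiring the most care rather than invocation — is passing cleanly from the modulus-of-continuity estimate $\Delta_\mu(h)\le c'(1+\sqrt h)$ to the precise weak transport-entropy inequality with a cost for which \cite[Corollary~5.11]{GRST14} applies with a genuinely sub-Gaussian (not merely sub-exponential) conclusion against $1$-Lipschitz functions. This amounts to checking that the cost function $\theta$ dual to $H(x)=\tfrac14 x^2$, namely $\theta(t)=H^*(t)=t^2$, combined with the $t_0$-shift in (ii), gives a one-dimensional weak transport cost whose coordinatewise-sum on $\RR^N$ dominates, on $1$-Lipschitz functions, a multiple of the squared Euclidean distance. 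One should verify this via the dual (infimum-convolution) formulation: the convex infimum-convolution inequality $\int e^{Q_c g}\,d\mu\cdot\int e^{-g}\,d\mu\le 1$ (for convex $g$) tensorizes, and evaluating it on $g=s\varphi$ with $\varphi$ convex $1$-Lipschitz and optimizing over $s>0$ gives the Laplace-transform bound $\int e^{s(\varphi-\int\varphi)}\,d\mu^{\otimes N}\le e^{As^2}$, whence Chebyshev yields the upper-tail bound; applying the same to $-\varphi$ (concave $1$-Lipschitz, for which one uses the companion inequality in the other direction from \cite{GRSST15}) gives the lower tail. The remaining details — converting mean to median, collecting the constants $A,B$ and recording that they depend only on $C$ — are routine.
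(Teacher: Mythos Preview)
Your approach is essentially the same as the paper's: the paper invokes Theorem~\ref{thm:summary} (which packages together Theorem~\ref{thm:main}, Proposition~\ref{prop:tr-U-GRSST15}, and Proposition~\ref{prop:weak-dual}) to conclude that $\mu$ satisfies the two-sided weak transport inequality $\Tbarbf(\theta(a\cdot))$ with the \emph{purely quadratic} cost $\theta(t)=t^2$, and then appeals to \cite[Corollary~5.11]{GRST14} for the dimension-free two-sided concentration. You trace through the same chain of implications individually.

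One clarification: your ``main obstacle'' paragraph is worrying about a non-issue. In the case $H(x)=\tfrac14 x^2$ one has $\theta=H^*$ with $\theta(t)=t^2$ for \emph{all} $t\geq 0$, not just near zero; the parameter $t_0$ in condition~(ii) of Theorem~\ref{thm:main} enters only through the shift in $\theta^{-1}(t_0^2+h)$ and does not introduce any quadratic-linear transition in the cost. Consequently the one-dimensional weak transport cost is purely quadratic, its coordinatewise tensorization on $\RR^N$ is exactly $|x|^2$, and the concentration coming out of \cite[Corollary~5.11]{GRST14} is genuinely sub-Gaussian for all $t\geq 0$ without any adjustment. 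So your discussion of ``the quadratic one being activated by a $1$-Lipschitz test function'' and of a sub-exponential regime for large $t$ can simply be dropped.
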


The article is organized as follows. In Section~\ref{sec:prelim} we recall the definitions of the weak transport-entropy inequalities and provide preliminary results about infimum convolution inequalities from~\cite{GRST14} and~\cite{GRSST15}. In Sections~\ref{sec:LSI-tr}  and~\ref{sec:tr-U} we prove that the conditions (i) and (ii) respectively are equivalent to a weak transport-entropy inequality. Finally, in Section~\ref{sec:summary} we summarize the results of the previous sections and give the proof of Theorem~\ref{thm:main} (and Corollary~\ref{cor:concentration-x^2}). We also recapitulate all conditions equivalent to the convex log-Sobolev inequality in the quadratic case and pose some open questions.

\section{A reminder of weak transport-entropy inequalities}
\label{sec:prelim}

In this section, we recall the definition of the weak transport problem and some preliminary results. Although throughout the most part of the paper we work with measures on the real line we introduce the notation in a slightly greater generality. Below $|\cdot|$ denotes the standard Euclidean norm in $\RR^n$.

\subsection{Weak transport costs} For two probability measures $\mu_1$, $\mu_2$ on $\RR^n$ the weak (barycentric) transport cost associated to the convex cost function  $\theta:[0,\infty)\to[0,\infty]$ is defined by the formula
\begin{equation*}
\calTbar_{\theta} (\mu_2 |\mu_1) = \inf_{\pi} \int_{\RR^n} \! \theta\Bigl( \big| x - \int_{\RR^n} \! y \ p(x,dy) \big|\Bigr) \mu_1(dx),
 \end{equation*}
 where the infimum is taken over all couplings $\pi(dxdy) = p(x,dy) \mu_1(dx)$ of $\mu_1$ and $\mu_2$, and $p(x,\cdot)$ denotes the disintegration kernel of $\pi$ with respect to its first marginal. Using probabilistic notation we can write
\begin{equation*}
 \calTbar_{\theta} (\mu_2 |\mu_1) = \inf \EE \theta \bigl(|X_1 - \EE(X_2|X_1)|\bigr),
\end{equation*}
where the infimum is taken over all random variables $X_1\sim \mu_1$, $X_2\sim\mu_2$. The adjective \textit{weak} stands for the fact that, by Jensen's inequality, it is smaller than the classical transport cost,
\begin{equation}\label{eq:calT}
\mathcal{T}_{\theta} (\mu_2, \mu_1) = \inf_{\pi} \int_{\RR^n\times \RR^n} \!\! \theta(| x -  y |) \pi(dx,dy),
 \end{equation}
 considered in the Monge-Kantorovich transport problem. Note also that in contrast to $\mathcal{T}_{\theta}$ the weak transport cost $\calTbar_{\theta}$ is not symmetric.

\subsection{From weak transport inequality to infimum convolution inequality}
Recall that the relative entropy of $\nu$ with respect to $\mu$ is given by the formula
\begin{equation*}
H(\nu|\mu) =\int_{\RR^n} \log\Bigl(\frac{d\nu}{d\mu}\Bigr) d\nu
\end{equation*}
if $\nu$ is absolutely continuous with respect to $\mu$; otherwise we set $H(\nu|\mu)=+\infty$.

We say that a probability measure $\mu$ on $\RR^n$ \emph{satisfies the weak transport-entropy inequality}
 $\Tbarbf^-(\theta)$ (respectively  $\Tbarbf^+(\theta)$) if
 \begin{equation*}
 \calTbar_{\theta}(\mu|\nu) \leq  H(\nu|\mu) \quad \text{(respectively, } \calTbar_{\theta}(\nu|\mu) \leq H(\nu|\mu) \text{)}
 \end{equation*}
 for every probability measure $\nu$ on $\RR^n$ having a finite first moment.
  We say that $\mu$ satisfies $\Tbarbf (\theta)$ if $\mu$ satisfies  both $\Tbarbf^-(\theta)$ and $\Tbarbf^+(\theta)$.

A Bobkov-G\"otze type criterion for the weak-transport inequality was given in \cite[Proposition~4.5]{GRST14} (cf. also~\cite[Lemma~4.1]{GRSST15}). It is formulated in terms of the infimum convolution operator $Q_t^{\theta}$ defined by the formula
\begin{equation*}
Q_t^{\theta} f(x) = \inf_{y\in \RR^n} \big\{ f(y) + t \theta\Bigl(\frac{|x-y|}{t}\Bigr)\big\}, \quad x\in\RR^n,
\end{equation*}
for $t>0$ and $f:\RR^n\to\RR$.

\begin{proposition}[{\cite[Proposition~4.5]{GRST14}}]\label{prop:weak-dual}
Let $\mu$ be a probability measure on $\RR^n$ and let $\theta:[0,\infty)\to[0,\infty]$ be a convex cost function. Then the following holds.
\begin{enumerate}[(a)]
	\item The measure  $\mu$ satisfies $\Tbarbf^-(\theta)$ if and only if
\begin{equation}\label{eq:Tminus-dual}
\int_{\RR^n} \exp(Q_1^{\theta} f) d\mu \exp\Bigl( -\int_{\RR^n} f d\mu\Bigr) \leq 1
\end{equation}
for every convex function $f:\RR^n\to\RR$ bounded from below.
	\item The measure  $\mu$ satisfies $\Tbarbf^+(\theta)$ if and only if
\begin{equation*}
 \exp\Bigl( \int_{\RR^n}Q_1^{\theta} f d\mu\Bigr) \int_{\RR^n} \exp(-f) d\mu \leq 1
\end{equation*}
for every convex function $f:\RR^n\to\RR$ bounded from below.
	\item If the measure  $\mu$ satisfies $\Tbarbf (\theta)$, then
\begin{equation*}
\int_{\RR^n} \exp(Q_t^{\theta} f) d\mu  \int_{\RR^n} \exp(-f)\leq 1
\end{equation*}
for $t=2$ and every convex function $f:\RR^n\to\RR$ bounded from below. Conversely, if the above inequality holds for some $t>0$, then $\mu$ satisfies $\Tbarbf(t\theta(\cdot/t))$.
\end{enumerate}
\end{proposition}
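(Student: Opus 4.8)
The natural route is the one behind the classical Bobkov-G\"otze criterion, adapted to the barycentric cost: combine a Kantorovich-Rubinstein-type duality for the weak transport cost $\calTbar_\theta$ with the Donsker-Varadhan variational formula for relative entropy. The first ingredient I would record is that, for a convex cost $\theta$ with $\theta(0)=0$ and probability measures $\mu_1,\mu_2$ with finite first moments,
\[
\calTbar_\theta(\mu_2|\mu_1)=\sup\Bigl\{\int_{\RR^n}Q_1^\theta f\,d\mu_1-\int_{\RR^n}f\,d\mu_2\Bigr\},
\]
the supremum running over convex $f:\RR^n\to\RR$ bounded from below. The inequality ``$\geq$'' is elementary: for a coupling $\pi(dx\,dy)=p(x,dy)\mu_1(dx)$ with barycenter $b(x)=\int y\,p(x,dy)$ and convex $f$ one has $Q_1^\theta f(x)\le f(b(x))+\theta(|x-b(x)|)$, hence $\theta(|x-b(x)|)\ge Q_1^\theta f(x)-f(b(x))$; integrating against $\mu_1$ and using Jensen's inequality $\int f(b(x))\,\mu_1(dx)\le\int\!\!\int f(y)\,p(x,dy)\,\mu_1(dx)=\int f\,d\mu_2$ gives the bound after taking $\sup_f$ and $\inf_\pi$. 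The reverse inequality ``$\leq$'' is the substantive point: it is the Kantorovich duality for weak transport costs proved in \cite{GRST14} via a minimax argument, and the restriction to \emph{convex} test functions is essential, reflecting the fact that in the inner optimization over kernels with prescribed barycenter a Dirac mass is optimal precisely for convex integrands.

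Granting this, (a) and (b) follow by pairing the duality with the Legendre duality of entropy, $\log\int e^g\,d\mu=\sup_\nu\{\int g\,d\nu-H(\nu|\mu)\}$, whose elementary half is Young's inequality $\int g\,d\nu\le H(\nu|\mu)+\log\int e^g\,d\mu$. For (a): if $\mu$ satisfies $\Tbarbf^-(\theta)$, then for every convex $f$ bounded below the easy half of the duality (with $\mu_1=\nu$, $\mu_2=\mu$) gives $\int Q_1^\theta f\,d\nu-\int f\,d\mu\le\calTbar_\theta(\mu|\nu)\le H(\nu|\mu)$ for all $\nu$ of finite first moment; taking the supremum over $\nu$ and using the variational formula yields $\log\int e^{Q_1^\theta f}\,d\mu\le\int f\,d\mu$, i.e.\ \eqref{eq:Tminus-dual}. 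Conversely, \eqref{eq:Tminus-dual} and Young's inequality give $\int Q_1^\theta f\,d\nu-\int f\,d\mu\le H(\nu|\mu)$ for all $f$ and $\nu$; taking the supremum over $f$ and invoking the \emph{full} duality yields $\calTbar_\theta(\mu|\nu)\le H(\nu|\mu)$. Part (b) is obtained identically after swapping the roles of $\mu_1$ and $\mu_2$ and using the companion identity $-\log\int e^{-f}\,d\mu=\inf_\nu\{\int f\,d\nu+H(\nu|\mu)\}$.

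For (c) I would use the semigroup identity $Q_{s+t}^\theta f=Q_s^\theta(Q_t^\theta f)$, which reduces to the fact that the inf-convolution of $u\mapsto s\theta(u/s)$ and $u\mapsto t\theta(u/t)$ is $u\mapsto(s+t)\theta(u/(s+t))$ — a direct consequence of the convexity and monotonicity of $\theta$. In particular $Q_2^\theta f=Q_1^\theta(Q_1^\theta f)$, and $Q_1^\theta f$ is again convex and bounded below whenever $f$ is. If $\mu$ satisfies $\Tbarbf(\theta)$, apply the criterion of (a) to $g:=Q_1^\theta f$ and the criterion of (b) to $f$; multiplying the two resulting inequalities makes the factors $\exp(\pm\int Q_1^\theta f\,d\mu)$ cancel, leaving $\int e^{Q_2^\theta f}\,d\mu\int e^{-f}\,d\mu\le1$. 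For the partial converse, write $Q_t^\theta f=Q_1^{\theta_t}f$ with $\theta_t:=t\theta(\cdot/t)$ convex and vanishing at $0$; applying Jensen's inequality to each factor of the assumed inequality separately ($\int e^{-f}\,d\mu\ge e^{-\int f\,d\mu}$ in one case, $\int e^{Q_1^{\theta_t}f}\,d\mu\ge e^{\int Q_1^{\theta_t}f\,d\mu}$ in the other) recovers the criteria of (a) and of (b) for the cost $\theta_t$, hence $\mu$ satisfies $\Tbarbf(t\theta(\cdot/t))$.

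The only genuinely hard ingredient is the ``$\leq$'' half of the weak-transport duality in the first paragraph; everything else is bookkeeping with Jensen's and Young's inequalities and the semigroup identity for $Q_t^\theta$. A secondary nuisance is checking the integrability conditions needed to apply the Donsker-Varadhan formula — that $e^{Q_1^\theta f}\in L^1(\mu)$ and that the relevant Gibbs measures have finite first moment — which one handles by first treating bounded $f$ and then passing to the limit through suitable convex approximations.
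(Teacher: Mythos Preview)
The paper does not give its own proof of this proposition: it is quoted verbatim from \cite[Proposition~4.5]{GRST14} and used as a black box. Your outline is the standard argument behind that result --- weak-transport Kantorovich duality plus the Donsker--Varadhan variational formula for entropy, with the semigroup identity $Q_{s+t}^\theta=Q_s^\theta\circ Q_t^\theta$ handling part (c) --- and is correct as a sketch; you have also correctly flagged that the only genuinely nontrivial step is the ``$\leq$'' direction of the duality, which is exactly what \cite{GRST14} supplies.
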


\section{Equivalence of the convex Log-Sobolev inequality\\
and the weak transportation inequality}
\label{sec:LSI-tr}

In this section we establish the equivalence of the convex log-Sobolev inequality and the weak transport-entropy inequality. In the case of the quadratic cost this was done in \cite{GRST14} (see also \cite{MR3456588} for related results for other cost functions). Using the techniques developed therein, in especially the dual formulation \eqref{eq:Tminus-dual}, we extend this result to a wider class of cost functions. We work with measures on the real line, but in contrast to Section~\ref{sec:tr-U} there are no essential problems with extending the results of this section to a higher dimensional setting (cf. \cite{GRST14, MR3456588}).

Let $H:\RR\to[0,\infty)$ be a symmetric convex function, such that $H(x) =\tfrac{1}{4} x^2$ for $x\in[-2t_0, 2t_0]$ for some $t_0>0$. Note that then, the Fenchel-Legendre transform of $H$, given by the formula
\begin{equation*}
H^*(x) = \sup_{y\in\RR} \{ xy - H(y)\}, \quad x\in\RR,
\end{equation*}
is also quadratic near zero (namely, $H^*(x) = x^2$ for $x\in[-t_0,t_0]$, since for such $x$ the supremum in the definition of $H^*$ is attained at $y=2x$). We assume moreover, that there exists $A\in[1,\infty)$ and $\alpha\in(1,2]$ such that
\begin{equation}\label{eq:H-scaling}
\forall_{x\in\RR} \ \forall_{s\in[0,1]} \quad H(sx) \leq A s^{\alpha} H(x).
\end{equation}
Finally, we assume that $\lim_{x\to\infty} H^*(x)/x = \infty$.

The main result of this section is the following.

\begin{proposition}\label{prop:tr-LSI}
For a probability measure $\mu$ on the real line the following conditions are equivalent.
\begin{enumerate}[(i)]
\item There exists $a>0$ such that measure $\mu$ satisfies $\Tbarbf^-(H^*(a\cdot))$.
\item For every $s>0$ we have $\int_{\RR} e^{s|x|} d\mu(x)<\infty$ and there exists $c>0$ such that 
\begin{equation*}
\Ent_{\mu}(e^{\varphi}) \leq \int_{\RR} H(c \varphi')e^{\varphi} d\mu
\end{equation*}
for every smooth convex Lipschitz function $\varphi:\RR\to\RR$.
\end{enumerate}
The dependence of the constants is the following: (i) implies (ii) with $c=2/a$; (ii) implies (i) with $a=  ((\alpha-1)/A)^{1/\alpha} c^{-1}$.
\end{proposition}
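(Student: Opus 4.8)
The plan is to go through the dual (Bobkov--G\"otze type) description of $\Tbarbf^-$ from Proposition~\ref{prop:weak-dual}(a): condition~(i) is equivalent to the infimum-convolution inequality
\[
\int_{\RR} e^{Q_1^{H^*(a\cdot)}f}\,d\mu\ \le\ e^{\int_{\RR} f\,d\mu}\qquad(\star)
\]
for every convex $f\colon\RR\to\RR$ bounded below, where $Q_1^{H^*(a\cdot)}f(x)=\inf_y\{f(y)+H^*(a|x-y|)\}$. Two facts are used throughout: the Fenchel identity $\bigl(H^*(a\cdot)\bigr)^{*}(q)=H(q/a)$ (valid since $H=H^{**}$), and that for convex $f$ the Hopf--Lax formula $u(t,\cdot):=Q_t^{H^*(a\cdot)}f$ defines a function that is convex and Lipschitz in $x$, decreasing in $t$, and solves $\partial_t u=-H(|\partial_x u|/a)$ almost everywhere.

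\emph{Proof of (ii)$\Rightarrow$(i).} Fix convex Lipschitz $f$ bounded below, put $\beta(t)=t^{1/(\alpha-1)}$ and $\Lambda(t)=\beta(t)^{-1}\log\int e^{\beta(t)u(t,\cdot)}\,d\mu$. Differentiating (the regularity of the Hopf--Lax semigroup justifies this) and using the Hamilton--Jacobi equation,
\[
\Lambda'(t)=\frac{\beta'(t)}{\beta(t)^2}\cdot\frac{\Ent_\mu\bigl(e^{\beta u}\bigr)}{\int e^{\beta u}\,d\mu}\ -\ \frac{\int H\bigl(|\partial_x u|/a\bigr)e^{\beta u}\,d\mu}{\int e^{\beta u}\,d\mu}.
\]
Bound the entropy term by applying the convex log-Sobolev inequality~(ii) to (a mollification of) the convex Lipschitz function $\beta(t)u(t,\cdot)$, and then use the scaling hypothesis $H(sx)\le As^{\alpha}H(x)$ with $s=ac\,\beta(t)$: since $ac=\bigl((\alpha-1)/A\bigr)^{1/\alpha}\le1$ (because $0<\alpha-1\le1\le A$) one has $s\le1$, and the choice $a=\bigl((\alpha-1)/A\bigr)^{1/\alpha}c^{-1}$ makes the integrand pointwise non-positive, so $\Lambda'\le0$ on $(0,1)$. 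As $t\to0^+$, $\beta(t)\to0$ and $u(t,\cdot)\to f$ uniformly (the Lipschitz sandwich $f-tH(\mathrm{Lip}(f)/a)\le Q_t^{H^*(a\cdot)}f\le f$), so $\Lambda(0^+)=\int f\,d\mu$; since $\beta(1)=1$, $\Lambda(1)=\log\int e^{Q_1^{H^*(a\cdot)}f}\,d\mu$, and $\Lambda(1)\le\Lambda(0^+)$ is $(\star)$ for Lipschitz $f$. Approximating a general convex $f$ bounded below by its $K$-Lipschitz regularizations $\inf_y\{f(y)+K|x-y|\}$ and letting $K\to\infty$ (monotone convergence) gives $(\star)$ in general, hence (i).

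\emph{Proof of (i)$\Rightarrow$(ii).} First, exponential integrability: choosing $R_0$ with $\mu([-R_0,R_0])\ge\tfrac12$ and testing $\Tbarbf^-(H^*(a\cdot))$ against the normalized restriction $\nu$ of $\mu$ to $[-R_0,R_0]$, note that in every coupling $\EE(X_2\mid X_1)\in[-R_0,R_0]$, so
\[
\int_{\{|x|>R_0\}} H^*\bigl(a(|x|-R_0)\bigr)\,d\mu(x)\ \le\ H(\nu\mid\mu)\ =\ -\log\mu([-R_0,R_0])\ \le\ \log 2,
\]
and superlinearity of $H^*$ gives $\int|x|\,d\mu<\infty$; applying $(\star)$ to the $\lambda$-Lipschitz function $f(x)=\lambda|x|$ and using $Q_1^{H^*(a\cdot)}(\lambda|\cdot|)(x)\ge\lambda|x|-H(\lambda/a)$ then yields $\int e^{\lambda|x|}\,d\mu<\infty$ for all $\lambda>0$. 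The key step is that for convex Lipschitz $\varphi$, convexity ($\varphi(y)\ge\varphi(x)+\varphi'(x)(y-x)$) gives the pointwise lower bound $Q_1^{H^*(a\cdot)}\varphi(x)\ge\varphi(x)-H\bigl(\varphi'(x)/a\bigr)$, so $(\star)$ (applied to $\varphi$, reduced to the bounded-below case by adding a constant and, if needed, truncating) gives
\[
\int_{\RR} e^{\varphi-H(\varphi'/a)}\,d\mu\ \le\ e^{\int_\RR \varphi\,d\mu}\qquad\text{for every convex Lipschitz }\varphi .
\]
From this family one recovers the convex log-Sobolev inequality with $c=2/a$ by a Herbst-type argument extending the quadratic case of~\cite{GRST14}: running the Hamilton--Jacobi flow $w(t,\cdot)=Q_t^{H^*(a\cdot)}\varphi$ and reversing the Gr\"onwall estimate above --- now feeding in $(\star)$ in place of the log-Sobolev inequality and using the quadratic-near-zero normalization of $H$ together with $H(sx)\le As^{\alpha}H(x)$ to control the surviving terms --- one extracts $\Ent_\mu(e^\varphi)\le\int H(2\varphi'/a)e^\varphi\,d\mu$, and tracking constants gives exactly $c=2/a$.

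The main obstacle is this last extraction. A mere linearization of the displayed family only produces the convex Poincar\'e inequality (namely $\Var_\mu\varphi\le\frac{1}{2a^2}\int_\RR|\varphi'|^2\,d\mu$), so one must exploit the full nonlinear content of $(\star)$; it is precisely here that the hypotheses that $H$ is quadratic near the origin and that $H(sx)\le As^{\alpha}H(x)$ are needed, and this is the part of the argument that genuinely goes beyond the quadratic computations of~\cite{GRST14}.
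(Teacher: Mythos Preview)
Your argument for (ii)$\Rightarrow$(i) is essentially the paper's Hamilton--Jacobi approach in a different parametrization: you use $\beta(t)=t^{1/(\alpha-1)}$ together with $Q_t^{H^*(a\cdot)}$, while the paper uses $k(t)=(\widetilde{A}c)^{-\alpha/(\alpha-1)}((\alpha-1)t)^{1/(\alpha-1)}$ together with $Q_t^{H^*}$; the computations match and your choice of $a$ correctly makes the bracket vanish. One point the paper addresses and you gloss over is the reduction to absolutely continuous $\mu$ (so that the log-Sobolev inequality, stated for smooth functions, can be applied to the merely Lipschitz $\beta(t)u(t,\cdot)$); this is handled there via convolution with a small uniform distribution and Lemma~\ref{lem:tech1}.

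The implication (i)$\Rightarrow$(ii), however, has a genuine gap. After the correct pointwise bound $Q_1^{H^*(a\cdot)}\varphi\ge\varphi-H(\varphi'/a)$ you arrive at
\[
\int_{\RR} e^{\varphi-H(\varphi'/a)}\,d\mu\ \le\ e^{\int_{\RR}\varphi\,d\mu}
\]
and then assert that a ``Herbst-type argument\dots reversing the Gr\"onwall estimate'' yields $\Ent_\mu(e^\varphi)\le\int H(2\varphi'/a)e^\varphi\,d\mu$. This step is not carried out, and it is not clear how it would go: as you yourself observe, linearizing the displayed family only recovers the convex Poincar\'e inequality, and you have at hand only the single time $t=1$ of the Hopf--Lax flow, so there is no differential inequality to ``reverse''. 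Your diagnosis that ``it is precisely here that the hypotheses that $H$ is quadratic near the origin and that $H(sx)\le As^\alpha H(x)$ are needed'' is also off: the paper states explicitly that (i)$\Rightarrow$(ii) is a general fact requiring no special assumptions on $H$. The paper's route is different and short: it invokes \cite[Proposition~8.3]{GRST14}, which says that $\Tbarbf^-(H^*(a\cdot))$ implies the $(\tau)$-log-Sobolev inequality
\[
\Ent_\mu(e^f)\le\frac{1}{1-\lambda}\int_{\RR}(f-R^\lambda f)\,e^f\,d\mu,\qquad\lambda\in(0,1),
\]
with $R^\lambda f(x)=\inf_y\{f(y)+\lambda H^*(a|x-y|)\}$ for convex $f$; then the same convexity bound you use gives $f-R^\lambda f\le\lambda H(f'/(a\lambda))$, and setting $\lambda=1/2$ yields the claim with $c=2/a$. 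The missing ingredient in your attempt is precisely this $(\tau)$-log-Sobolev inequality, which encodes the full strength of $\Tbarbf^-$ (a one-parameter family in $\lambda$) rather than the single inequality~$(\star)$.
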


The implication (i) $\implies$ (ii) is a general fact and no special assumptions are used in the proof. For the sake of completeness we sketch the main argument here.

\begin{proof}[Proof of Proposition~\ref{prop:tr-LSI}, (i) $\implies$ (ii)]
The exponential integrability follows from the dual formulation of $\Tbarbf^-(H^*(a\cdot))$ tested with the function $x\mapsto s|x|$ (cf.~\cite[p. 86]{MR3456588}).

According to \cite[Proposition~8.3]{GRST14}, (i) implies that the so-called $(\tau)-$log-Sobolev inequality holds: for all
functions $f:\RR\to\RR$ with $\int_{\RR} f e^f d\mu<\infty$ we have
\begin{equation*}
\Ent_\mu(e^f)\leq \frac{1}{1-\lambda}\int_{\RR} (f-R^\lambda f)e^fd\mu, \quad \lambda\in(0,1).
\end{equation*}
Here
\begin{equation*}
R^\lambda f(x):=\inf_p \Bigl\{\int_{\RR} f(y) p(dy)+\lambda H^*\bigl(a|x-\int_{\RR} yp(dy)|\bigr)\Bigr\},
\end{equation*}
where the infimum is taken over all probability measures $p$ on $\RR$ (note that we skip the dependence on $H^*$ in the notation). Since $H^*$ is convex,  for convex functions $f$
the infimum above is achieved at some Dirac measure:
\begin{equation*}
R^\lambda f(x)=\inf_{y \in\RR}\{f(y)+\lambda H^*(a(x-y))\}.
\end{equation*}
Now, by convexity of $f$,
\begin{multline*}
f(x)-R^\lambda f(x)\leq f(x) + \sup_{y\in\RR} \{ -f(y) - \lambda H^*(a(x-y))\} \\
\leq f(x) + \sup_{y\in\RR} \{ -f(x) - f'(x)(y-x) -\lambda H^*(a(x-y))\} = \lambda H\Bigl(\frac{f'(x)}{a\lambda}\Bigr).
\end{multline*}
Thus, after taking $\lambda=1/2$, we arrive at the assertion (with $c=2/a$).
\end{proof}

For the proof of the second implication we need the following two simple lemmas. The first is of independent interest. It is based on an argument of Maurey (cf. \cite[Proof of Theorem~3]{Mau91}), but takes into account the observation that for compactly supported measures it doesn't matter whether for large arguments the cost function is quadratic or equal to $+\infty$. Recall that $|\cdot|$ stands for the Euclidean norm.

\begin{lemma}\label{lem:tech1}
Let $\mu$ be a probability measure on $\RR^n$ such that $x,y\in\supp \mu \implies |x-y|\leq D$  and denote
\begin{equation*}
\theta_D(x) = \begin{cases} \frac{1}{4D^2} |x|^2 & \text{ if } |x|\leq D,\\
+\infty & \text{ if } |x|> D.
\end{cases}
\end{equation*}
Then for any convex function $\varphi:\RR^n\to\RR$ bounded from below
\begin{equation}\label{ineq:inf-con-bounded}
 \int_{\RR^n} e^{Q_1^{\theta_D} \varphi} d\mu \int_{\RR^n} e^{-\varphi} d\mu \leq 1,
\end{equation}
where $Q_1^{\theta_D} \varphi (x) = \inf \{\varphi(y) + \theta_D(x-y) : y\in\RR^n \}$, $x\in\RR^n$, stands for the infimum convolution.

Conversely, if inequality~\eqref{ineq:inf-con-bounded} holds for some $D$, then the support of $\mu$ is bounded: $x,y\in\supp \mu \implies |x-y|\leq D$.
\end{lemma}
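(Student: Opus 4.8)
The plan is to prove the two implications separately, starting with the easy one. For the converse — that \eqref{ineq:inf-con-bounded} forces $\supp\mu$ to have diameter at most $D$ — I would argue by contraposition. Suppose $x_0,y_0\in\supp\mu$ with $|x_0-y_0|>D$, set $u=(x_0-y_0)/|x_0-y_0|$, and test \eqref{ineq:inf-con-bounded} against the convex functions $\varphi_t(x)=t\,(\langle u,x\rangle-\langle u,y_0\rangle)_{+}$, $t>0$, which are bounded from below by $0$. Two elementary bounds suffice. First, $\varphi_t\le tr$ on the ball $B(y_0,r)$, so $\int e^{-\varphi_t}\,d\mu\ge e^{-tr}\mu(B(y_0,r))$. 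Second — and this is the only place where the value $+\infty$ of $\theta_D$ beyond distance $D$ is used — for $x\in B(x_0,r)$ the infimum defining $Q_1^{\theta_D}\varphi_t(x)$ runs only over $y$ with $|x-y|\le D$, and every such $y$ satisfies $\langle u,y\rangle-\langle u,y_0\rangle\ge|x_0-y_0|-D-r$, whence $Q_1^{\theta_D}\varphi_t\ge t(|x_0-y_0|-D-r)$ on $B(x_0,r)$. Putting $\eta:=|x_0-y_0|-D>0$ and inserting both bounds into \eqref{ineq:inf-con-bounded} gives $\mu(B(x_0,r))\,\mu(B(y_0,r))\,e^{t(\eta-2r)}\le1$; since both balls have positive $\mu$-mass, choosing $r<\eta/2$ and letting $t\to\infty$ is the desired contradiction.

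For the forward implication I would follow the argument of Maurey \cite{Mau91}. For convenience I would first reduce to $\varphi$ convex \emph{and} Lipschitz: the functions $\varphi_n(x):=\inf_{y}\{\varphi(y)+n|x-y|\}$ are convex, $n$-Lipschitz, bounded below, and increase pointwise to $\varphi$, and since $Q_1^{\theta_D}\varphi_n\uparrow Q_1^{\theta_D}\varphi$ while $0<e^{-\varphi_n}\downarrow e^{-\varphi}\le e^{-\inf\varphi}$, monotone and dominated convergence reduce \eqref{ineq:inf-con-bounded} for $\varphi$ to the same inequality for each $\varphi_n$. Next I would normalize $\int e^{-\varphi}\,d\mu=1$ (the inequality is invariant under adding a constant to $\varphi$), so that $d\nu:=e^{-\varphi}\,d\mu$ is a probability measure supported in $\supp\mu$ — hence of diameter $\le D$, with barycenter in the convex hull of $\supp\mu$ — and the goal becomes $\int e^{Q_1^{\theta_D}\varphi}\,d\mu\le1$. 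One then runs Maurey's estimate; the feature relevant here — this is the observation announced before the statement — is that it only transports mass between points of $\supp\mu$, equivalently of its convex hull, where all pairwise distances are at most $D$, so one may use the truncated cost $\theta_D$ in place of the global quadratic $w\mapsto|w|^2/(4D^2)$ without loss. (If one prefers to invoke Maurey's inequality only for regular measures — say with a positive density — a general bounded $\mu$ is reached by weakly approximating it by such measures supported in a fixed compact neighbourhood of $\supp\mu$ and applying the inequality with $D$ replaced by $D_n\downarrow D$; the limit is legitimate because the convex functions $Q_1^{\theta_{D_n}}\varphi$ increase to the finite convex function $Q_1^{\theta_D}\varphi$ and hence converge uniformly on that neighbourhood.)

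I expect the forward estimate to be the only real obstacle, and within it, obtaining the \emph{sharp} constant $\tfrac1{4D^2}$. A naive argument — couple $\mu$ to $\nu$ by the monotone transport map and bound $\theta_D(x-y)\le\tfrac14$ on $\supp\mu$ — only yields $\int e^{Q_1^{\theta_D}\varphi}\,d\mu\le e^{1/4}$, and recovering the lost factor is exactly the content of Maurey's computation, which must exploit the convexity of $\varphi$ together with the precise interplay between the quadratic cost and the entropy term, not just crude pointwise bounds. Everything else — the two reductions, the normalization, the approximation, and the whole converse — is routine.
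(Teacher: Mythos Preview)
Your converse is correct and essentially the paper's argument: you use affine test functions $\varphi_t$ whereas the paper uses $\varphi_a(x)=a\,\operatorname{dist}(x,B(x_0,\eps))$, but the mechanism---send the scaling parameter to infinity and exploit that the infimum in $Q_1^{\theta_D}$ ranges only over $|x-y|\le D$---is the same.

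The forward direction, however, is not a proof as written. You invoke ``Maurey's estimate'' as a black box, but the \emph{statement} of Maurey's theorem only gives the inf-convolution inequality for the untruncated quadratic cost $|\cdot|^2/(4D^2)$, and since $Q_1^{\theta_D}\varphi\ge Q_1^{|\cdot|^2/(4D^2)}\varphi$, that inequality is weaker than the one you need. Your remark that one may replace the global quadratic by $\theta_D$ ``because mass is only transported within $\supp\mu$'' is the right intuition but is not justified by anything you wrote, and the language of transport maps, barycenters, and ``entropy terms'' in your last paragraph points away from what actually happens. Maurey's argument involves no transport and no entropy: one normalizes so that $\inf_{\supp\mu}\varphi=0$ (not $\int e^{-\varphi}d\mu=1$), and for $x\in\supp\mu$ one bounds $Q_1^{\theta_D}\varphi(x)$ by plugging in the interpolated point $y=(1-\lambda)x+\lambda z$ with $z$ an approximate minimizer of $\varphi$ on $\supp\mu$. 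The crucial observation---the one you gesture at but do not prove---is that $|x-y|=\lambda|x-z|\le D$, so the truncated cost is finite at this competitor. Optimizing over $\lambda$ yields $Q_1^{\theta_D}\varphi(x)\le k(\varphi(x))$ with $k(u)=u-u^2$ for $u\le 1/2$ and $k(u)=1/4$ otherwise, and one finishes with the elementary inequality $e^{k(u)}\le 2-e^{-u}$ together with $2-t\le 1/t$ for $t\in(0,1]$. This is the entire content of the forward direction; your Lipschitz approximation and the introduction of $\nu$ are unnecessary.
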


\begin{proof}
Assume that the diameter of the support of $\mu$ is bounded by $D$. Take a convex function $\varphi$, bounded from below. By adding a constant to $\varphi$, we may assume that $\inf_{\supp \mu} \varphi =0$. Take any $\eps>0$,  any $x\in\supp\mu$, and let $z\in\supp \mu$ be such that $\varphi(z)<\eps$. Moreover, define $y=(1-\lambda)x+\lambda z$, where $\lambda\in[0,1]$. Then $|x-y|\leq D$ and hence
\begin{align*}
Q_1^{\theta_D} \varphi(x)
&\leq \varphi(y) + \frac{1}{4D^2}|x-y|^2\leq (1-\lambda)\varphi(x) + \lambda\varphi(z) + \frac{\lambda^2}{4D^2}|x-z|^2\\
&\leq (1-\lambda)\varphi(x) + \lambda\eps + \frac{\lambda^2}{4}.
\end{align*}
We now let $\eps\to 0^+$, and then optimize with respect to $\lambda\in[0,1]$: if $\varphi(x)\geq 1/2$ we take $\lambda=1$, and if $0\leq\varphi(x)\leq 1/2$ we take $\lambda=2\varphi(x)$. This gives $Q_1^{\theta_D} \varphi(x)\leq k(\varphi(x))$, where
\begin{equation*}
k(u) = (u-u^2)\cdot1_{\{u\in[0,1/2)\}} + \tfrac{1}{4}\cdot 1_{\{u\geq 1/2\}}.
\end{equation*}
Note that we have $e^{k(u)}\leq 2-e^{-u}$. Indeed, for $u=1/2$ (or larger) the inequality holds, and for $u\in[0,1/2)$ we have
\begin{equation*}
(e^{u-u^2} + e^{-u})/2\leq e^{-u^2/2} \cosh(u-u^2/2)\leq e^{-u^2/2} \cosh(u) \leq 1.
\end{equation*}
Hence
\begin{equation*}
\int e^{Q_1^{\theta_D} \varphi} \mu \leq \int e^{k(\varphi)} d\mu  \leq 2-\int e^{-\varphi} d\mu
\leq \Big(\int e^{-\varphi} d\mu \Big)^{-1}.
\end{equation*}

Conversely, assume that inequality~\eqref{ineq:inf-con-bounded} holds, but there exist $x_0,y_0 \in\supp \mu$ such that
$|x_0-y_0|> D$. Then there exist $\eps, \delta >0$, such that $\mu(B(x_0,\eps))>\delta$ and $\mu(\RR^n \setminus B(x_0,D+\eps))>\delta$.
Consider now $\varphi_a:\RR^n\to \RR$ defined by the formula $\varphi_a(x) = a\operatorname{dist}\bigl(x, B(x_0,\eps)\bigr)$. For $x\in \RR^n\setminus B(x_0,D+\eps)$ we have
\begin{equation*}
Q_1^{\theta_D} \varphi_a (x)
= \inf_{y\in \RR^n : |x-y| \leq D} \bigl\{ a \operatorname{dist}\bigl(y, B(x_0,\eps)\bigr) + \frac{1}{4D^2} |x-y|^2  \bigr\} \geq a\eps.
\end{equation*}
Moreover, $\varphi_a =0$ on $B(x_0,\eps)$. Thus for sufficiently large  $a>0$,
\begin{align*}
\int_{\RR^n} e^{Q_1^{\theta_D} \varphi_a} d\mu \int_{\RR^n} e^{-\varphi_a} d\mu &\geq \int_{\RR^n \setminus B(x_0,D+\eps)} e^{Q_1^{\theta_D} \varphi_a} d\mu \int_{B(x_0,\eps)} e^{-\varphi_a}d\mu\\
&\geq \delta^2\exp(a\eps)>1 ,
\end{align*}
which contradicts inequality~\eqref{ineq:inf-con-bounded}.
\end{proof}

\begin{lemma}\label{lem:tech2}
We have $H(x)\geq A^{-1} t_0^{2-\alpha} x^{\alpha}$ for $x\geq t_0$.
\end{lemma}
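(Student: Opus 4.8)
The plan is to read this off directly from the scaling hypothesis~\eqref{eq:H-scaling}, pivoting at the point $t_0$, where the value of $H$ is known exactly. The content of the lemma is that $H$ grows at least like $x^{\alpha}$; since convexity together with $H(0)=0$ and $H(t_0)=\tfrac14 t_0^2$ yields only the linear lower bound $H(x)\geq\tfrac14 t_0 x$ for $x\geq t_0$, which is too weak when $\alpha>1$, the super-linear exponent has to come entirely from~\eqref{eq:H-scaling}.

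The first (and essentially only) step is the following. Fix $x\geq t_0$ and apply~\eqref{eq:H-scaling} at the point $x$ with the ratio $s:=t_0/x$, which lies in $(0,1]$ precisely because $0<t_0\leq x$; this gives
\begin{equation*}
H(t_0)=H(sx)\leq A s^{\alpha}H(x)=A\Bigl(\frac{t_0}{x}\Bigr)^{\alpha}H(x).
\end{equation*}
The second step is to invoke the normalization near the origin: since $t_0\in[-2t_0,2t_0]$ we have $H(t_0)=\tfrac14 t_0^2$, and substituting this into the previous display and rearranging gives
\begin{equation*}
H(x)\geq\frac{1}{A}\Bigl(\frac{x}{t_0}\Bigr)^{\alpha}H(t_0)=\frac{1}{4A}\,t_0^{2-\alpha}x^{\alpha},
\end{equation*}
which is the asserted estimate.

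I do not expect any genuine obstacle here: the argument is a one-line rearrangement of~\eqref{eq:H-scaling}. The only points that require (minimal) care are checking that the ratio $s=t_0/x$ really lies in the admissible range of~\eqref{eq:H-scaling}, and remembering to use the precise normalization $H(t_0)=t_0^2/4$ rather than mere convexity. In the sequel this estimate will be used only as a crude a~priori lower bound on the (super-linear) growth of $H$.
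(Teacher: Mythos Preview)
Your argument is exactly the paper's: the authors also write that the lemma ``follows immediately by taking $s=t_0/x$ in condition~\eqref{eq:H-scaling}.'' Note that what this actually yields (as you compute) is $H(x)\geq \tfrac{1}{4A}\,t_0^{2-\alpha}x^{\alpha}$, not the $A^{-1}t_0^{2-\alpha}x^{\alpha}$ literally written in the statement; the missing factor $\tfrac14$ is harmless for the only application of the lemma (absorbing the linear $H_0$ into $H$ for small~$\delta$), so you should not worry about it.
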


\begin{proof} This follows immediately by taking $s = t_0/x$ in condition~\eqref{eq:H-scaling}.\end{proof}

\begin{proof}[Proof of Proposition~\ref{prop:tr-LSI}, (ii) $\implies$ (i)]
Assume that (ii) holds.  Without loss of generality we can assume that $\mu$ is absolutely continuous with respect to the Lebesgue measure. Indeed, if $\gamma$ is a uniform probability distribution on $[0,\delta]$, then by Lemma~\ref{lem:tech1} it satisfies the convex log-Sobolev inequality with a quadratic-linear function
\begin{equation*}
H_0(x) = \delta^2x^2 \indbr{|x|\leq 1/(2\delta)} + (\delta|x|-1/4)\indbr{|x|>1/(2\delta)}
\end{equation*}
(and constant $c=2$). Hence by Lemma~\ref{lem:tech2} the product measure $\mu\otimes \gamma$ on $\RR^2$ satisfies (for sufficiently small $\delta>0$)
\begin{equation}\label{eq:mu-otimes-gamma-LSI}
\Ent_{\mu\otimes \gamma}(e^{\phi}) \leq \int_{\RR^2} \bigl(H(c \phi'_x) + H(c \phi'_y) \bigr) e^{\phi} d\mu\!\otimes\! \gamma
\end{equation}
for all smooth convex Lipschitz functions $\phi:\RR^2\to\RR$.  Let $\vphi:\RR \to \RR$ be a~smooth convex Lipschitz function and let $\phi:\RR^2\to \RR$ be defined by the formula $\phi (x,y) = \varphi(x+\eps y)$, $x,y\in\RR$. Applying~\eqref{eq:mu-otimes-gamma-LSI} to the function $\phi$  and using the assumption~\eqref{eq:H-scaling}, we see  that the convolution $\mu*\gamma_\varepsilon$, where $\gamma_\varepsilon(\cdot) = \gamma(\cdot/\varepsilon)$, satisfies, up to a multiplicative constant which tends to $1$ as $\eps\to 0^+$, the same modified log-Sobolev inequality as $\mu$:
\begin{equation*}
\Ent_{\mu* \gamma_{\eps}}(e^{\vphi}) \leq  (1+A \eps^{\alpha}) \int_{\RR} H(c \vphi') e^{\vphi} d\mu\! *\! \gamma_{\eps}.
\end{equation*}
The reader will easily check that the proof below shows that  $\mu*\gamma_\varepsilon$ satisfies (i)  with $a_\varepsilon  =  ((\alpha-1)/A_\eps)^{1/\alpha} c^{-1}$, where $A_\eps =  A\cdot(1+A \eps^{\alpha})$ (the multiplicative constant $1+A \eps^{\alpha}$ will appear in one place in the estimate of $F'(t)$). By the Lebesgue dominated convergence theorem this implies that $\mu$ satisfies (i) with $a=  ((\alpha-1)/A)^{1/\alpha} c^{-1}$.

Note that if $\mu$ is absolutely continuous, then standard approximation shows that (ii) holds for all convex Lipschitz functions (by the Rademacher theorem the gradient is then almost surely well defined).

Denote for brevity
\begin{equation*}
Q_t f(x) = Q_t^{H^*(\cdot)} f(x) = \inf_{y\in \RR} \big\{ f(y) + t H^*\Bigl(\frac{|x-y|}{t}\Bigr)\big\}
\end{equation*}
and set $F(t) = \int_{\RR} e^{k(t)Q_t\varphi(x)}d\mu(x)$
for $t>0$ (for some function $k$ yet to be determined). Using first the fact that $\partial_t Q_t\varphi + H(\partial_x Q_t\varphi)=0$ almost surely on $(0,\infty)\times\RR$, then the log-Sobolev inequality, and finally the estimate $H(ck(t) \cdot) \leq Ac^{\alpha}k(t)^{\alpha}H(\cdot)$ which follows from the assumption~\eqref{eq:H-scaling} if only $ck(t)\leq 1$, we arrive at
\begin{align*}
k(t)F'(t) &=
k(t) \int_{\RR} e^{k(t)Q_t\varphi(x)} \Bigl( k'(t) Q_t\varphi(x) + k(t)\partial_t Q_t\varphi(x)\Bigr) d\mu(x)\\
&= k(t)\int_{\RR} e^{k(t)Q_t\varphi(x)} \Bigl( k'(t) Q_t\varphi(x) - k(t) H\bigl(\partial_x Q_t\varphi(x)\bigr)\Bigr) d\mu(x)\\
&=
k'(t) F(t)\log F(t) + k'(t)\Ent_{\mu} \bigl(e^{k(t)Q_t\varphi}\bigr)\\
&\quad
 - k^2(t) \int_{\RR} e^{k(t)Q_t\varphi(x)}H\bigl(\partial_x Q_t\varphi(x)\bigr) d\mu(x)\\
&\leq
k'(t) F(t)\log F(t) + k'(t)\int_{\RR} e^{k(t)Q_t\varphi(x)}H\bigl(ck(t)\partial_x Q_t\varphi(x)\bigr) d\mu(x)\\
&\quad
- k^2(t) \int_{\RR} e^{k(t)Q_t\varphi(x)}H\bigl(\partial_x Q_t\varphi(x)\bigr) d\mu(x)\\
&\leq
 k'(t) F(t)\log F(t)\\
 &\quad  + \big[Ac^{\alpha}k'(t)k(t)^{\alpha} - k^2(t) \big]\cdot\int_{\RR} e^{k(t)Q_t\varphi(x)}H\bigl(\partial_x Q_t\varphi(x)\bigr) d\mu(x).
\end{align*}
Denote $\widetilde{A} = A^{1/\alpha}$ and take
\begin{equation*}
k(t) = (\widetilde{A} c)^{-\alpha/(\alpha-1)}\bigl((\alpha-1)t\bigr)^{1/(\alpha-1)}.
\end{equation*}
Then $k(0)=0$, $ck(t)\leq 1$ for $t\in[0,\widetilde{A} ^{\alpha}c/(\alpha-1)]$,
and $A c ^{\alpha}k'(t)k(t)^{\alpha} - k^2(t)=0$.
Thus the above differential inequality is equivalent to $(\log(F(t))/k(t))'\leq 0$ for almost all $t\in(0,\widetilde{A} ^{\alpha}c/(\alpha-1))$, which, since $Q_t\varphi\leq\varphi$, yields
\begin{equation*}
\frac{\log F(t)}{k(t)} \leq  \liminf_{s\to 0^+} \frac{\log F(s)}{k(s)} \leq \lim_{s\to 0^+} \frac{\log( \int_{\RR} e^{k(s)\varphi(x)}d\mu(x))}{k(s)} = \int_{\RR}\varphi d\mu
\end{equation*}
for $t\in(0,\widetilde{A} ^{\alpha}c/(\alpha-1))$. This is exactly the dual formulation of $\Tbarbf^-\bigl(tk(t)H^*(\cdot/t)\bigr)$ (see Proposition~\ref{prop:weak-dual}~(a)). Taking $t=t_*=\widetilde{A}c/(\alpha-1)^{1/\alpha}$ we see that $t_*k(t_*)=1$, $t_*\leq \widetilde{A} ^{\alpha}c/(\alpha-1)$ (recall that $A\geq 1$ and $1<\alpha\leq 2$), and thus also $ck(t_*)\leq 1$. We conclude that $\mu$ satisfies  $\Tbarbf^-\bigl(H^*(a\cdot)\bigr)$ with $a=1/t_* = ((\alpha-1)/A)^{1/\alpha} c^{-1}$.
\end{proof}

\section{Equivalence of the weak transportation inequality\\
and the condition on $U_{\mu}$}
\label{sec:tr-U}
In the previous section we showed  the equivalence of the  convex log-Sobolev inequality and the weak transport-entropy inequality. In this section, working towards the proof of the main theorem, we deal with weak transport-entropy inequalities.

Throughout this section let $\mu$ be a measure on the real line (which is not a Dirac mass) with median $m=F_{\mu}^{-1}(1/2)$. Denote $s_{\mu} = \inf \supp(\mu)\in[-\infty, \infty)$,  $t_{\mu} = \sup \supp(\mu)\in(-\infty, \infty]$.

Let $\theta:[0,\infty)\to[0,\infty)$ be a convex cost function such that $\theta(t)=t^2$ for $t\in[0,t_0]$ for some $t_0>0$. Note that by convexity  $\theta$ is increasing. We moreover assume that $\int_{0}^{\infty} \theta(d_1x +d_2)e^{-x}dx<\infty$ for all $d_1, d_2>0$. Recall that the following result is proved in~\cite{GRSST15}.

\begin{proposition}[{{\cite[Theorem 1.2]{GRSST15}}}] \label{prop:tr-U-GRSST15}
The following conditions are equivalent.
\begin{enumerate}[(i)]
\item There exists $a>0$ such that $\mu$ satisfies $\Tbarbf(\theta(a\cdot))$.
\item There exists $b>0$ such that for all $h>0$ we have
\begin{equation*}
\Delta_\mu(h)\leq \frac{1}{b}\theta^{-1}(t_0^2 + h).
\end{equation*}
\end{enumerate}
The dependence of the constants is the following: (i) implies (ii) with $b=\kappa_1 a$ and (ii) implies (i) with $a=\kappa_2 b$, where $\kappa_1 = \frac{t_0}{8\theta^{-1}(\log(3) +t_0^2)}$, $\kappa_2 = \frac{\min(1,t_0)}{210\theta^{-1}(2+t_0^2)}$.
\end{proposition}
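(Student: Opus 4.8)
The plan is to prove the two implications separately, in each case passing through the dual (infimum-convolution) formulation of the weak transport-entropy inequality provided by Proposition~\ref{prop:weak-dual}, and exploiting the fact that $U_\mu$ pushes the explicit measure $\tau$ forward onto $\mu$, so that all computations can be transferred to the real line equipped with $\tau$. Throughout I would work with the function $f=\varphi\circ U_\mu$ when $\varphi$ is convex on $\RR$: since $U_\mu$ is non-decreasing, $f$ need not be convex, but it is non-decreasing, and the key point (as in \cite{GRST14,GRSST15}) is that for \emph{monotone} cost-minimisation problems on $\RR$ the infimum-convolution $Q_t^\theta$ interacts nicely with monotone rearrangement. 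So the skeleton is: (a) reduce $\Tbarbf^{\pm}(\theta(a\cdot))$ for $\mu$ to an inequality of the form $\int e^{Q_1 (\varphi\circ U_\mu)} \, d\tau \cdot \int e^{-\varphi\circ U_\mu}\, d\tau \le 1$ over non-decreasing (or non-increasing) $\varphi$; (b) estimate $Q_1(\varphi\circ U_\mu)(x)$ from above in terms of $\Delta_\mu$ and the increments of $\varphi$; (c) run a one-dimensional Maurey-type / Herbst-type computation on $\tau$.

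\textbf{(ii) $\Longrightarrow$ (i).} Assume $\Delta_\mu(h)\le \frac1b\theta^{-1}(t_0^2+h)$. I would first record the elementary consequence that for $x\le y$ one has $U_\mu(y)-U_\mu(x)\le \Delta_\mu(y-x)\le \frac1b\theta^{-1}(t_0^2+(y-x))$, and conversely (by the exponential tails of $\tau$) a matching lower bound of the shape $U_\mu(y)-U_\mu(x)\ge$ (something) holding for $x,y$ on the same side of $0$; these two facts let me sandwich $|U_\mu(x)-U_\mu(y)|$ between functions of $|x-y|$. Then, given a convex $\varphi:\RR\to\RR$ bounded below, I would bound $Q_1^{\theta(a\cdot)}\varphi(U_\mu(x))=\inf_z\{\varphi(z)+\theta(a|U_\mu(x)-z|)\}$ by restricting the infimum to $z=U_\mu(w)$ and using the increment bound on $U_\mu$, reducing to an infimum-convolution-type expression on $(\RR,\tau)$ with a cost essentially of the form $\theta\bigl(\tfrac{a}{b}\theta^{-1}(t_0^2+|x-w|)\bigr)$, which for the right choice of $a=\kappa_2 b$ is dominated by $c_1|x-w|+c_2$ for suitable constants — here the hypothesis $\theta(t)=t^2$ near $0$ and convexity of $\theta$ are what make $\theta\circ\theta^{-1}$ of linear-plus-constant type after scaling. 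The remaining step is the classical computation: for the symmetric exponential measure $\tau$, an inequality $\int e^{Q_1 f}\,d\tau\int e^{-f}\,d\tau\le 1$ for non-decreasing $f$ (resp. a $\Tbarbf^+$ statement for non-increasing $f$) follows from a direct Maurey-style argument (transport mass towards the median, using $\tau([x,\infty))=\tfrac12 e^{-x}$ and $\tau((-\infty,x])=\tfrac12 e^{-x}$ for $x\ge0$), which gives the $\kappa_2=\frac{\min(1,t_0)}{210\theta^{-1}(2+t_0^2)}$ with the stated numerology after tracking constants. One subtlety: $\varphi\circ U_\mu$ may not be monotone on the whole line but only on each half-line where $U_\mu$ is, so I would split the integral over $\RR$ at $0$ (or at $m$) and treat the two tails separately, which is exactly why one needs \emph{both} $\Tbarbf^-$ and $\Tbarbf^+$, i.e. the full $\Tbarbf(\theta(a\cdot))$.

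\textbf{(i) $\Longrightarrow$ (ii).} Here I would test the dual inequalities from Proposition~\ref{prop:weak-dual} on explicit one-parameter families of convex functions tailored to probe a single increment of $U_\mu$: fix $h>0$ and $x_0\in\RR$ realising (up to $\eps$) the supremum in $\Delta_\mu(h)=\sup_x\{U_\mu(x+h)-U_\mu(x)\}$, and take $\varphi$ to be a convex "ramp'' (piecewise linear, slope $0$ then slope $\lambda$) whose kink is placed at $U_\mu(x_0)$, so that $\varphi\circ U_\mu$ jumps by $\approx\lambda\,\Delta_\mu(h)$ across an interval of $\tau$-length $\approx h$; plugging this into $\int e^{Q_1^{\theta(a\cdot)}\varphi}\,d\mu\le e^{\int\varphi\,d\mu}$ and computing both sides against $\tau$ gives, after optimising over the slope $\lambda$, an inequality forcing $\Delta_\mu(h)\lesssim \frac1a\theta^{-1}(\mathrm{const}+h)$; choosing the free parameters to make the constant inside $\theta^{-1}$ equal to $\log 3+t_0^2$ yields $b=\kappa_1 a$ with $\kappa_1=\frac{t_0}{8\theta^{-1}(\log 3+t_0^2)}$. (An alternative, possibly cleaner, route for this direction is to quote that $\Tbarbf(\theta(a\cdot))$ implies a two-sided concentration inequality for the coordinate function and then invert it using that the coordinate pushed to $\tau$ is $U_\mu$; but the direct test-function approach gives the explicit $\kappa_1$.)

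\textbf{Main obstacle.} The genuinely delicate point is step (b) of the forward implication: transferring the infimum-convolution $Q_1^{\theta(a\cdot)}$ from $(\RR,\mu)$ to $(\RR,\tau)$ through $U_\mu$ in such a way that (i) one only loses a controlled multiplicative/additive constant, and (ii) the resulting cost on $\tau$-side is still "convex enough'' (of linear-plus-constant type at the relevant scale) to run the Maurey argument. This hinges on a careful two-sided comparison of increments of $U_\mu$ with $\theta^{-1}$, handling separately the region near the median (where $\theta$ is quadratic and $U_\mu$ may have a jump, i.e. $\mu$ may have an atom or a gap in its support) and the tails (where the exponential decay of $\tau$ governs everything), and keeping every constant explicit so that the final $\kappa_1,\kappa_2$ come out as claimed. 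The non-connectedness of $\supp\mu$ — the whole reason for restricting to convex test functions — is precisely what makes the naive "just use the density of $\mu$'' approach fail and forces the $U_\mu$-based bookkeeping.
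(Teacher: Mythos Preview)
This proposition is not proved in the paper: it is quoted as \cite[Theorem~1.2]{GRSST15} and used as a black box (see the sentence ``Recall that the following result is proved in~\cite{GRSST15}'' preceding the statement, and the first line of the proof of Proposition~\ref{prop:tr-U}, which invokes it to reduce to the implication $\Tbarbf^-\Rightarrow$~(ii)). So there is no ``paper's own proof'' to compare against here; the paper's own contribution in Section~\ref{sec:tr-U} is the \emph{strengthening} Proposition~\ref{prop:tr-U}, and that argument takes Proposition~\ref{prop:tr-U-GRSST15} for granted.

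A few comments on your sketch nonetheless. The overall architecture for (ii)$\Rightarrow$(i) --- push through $U_\mu$ to $\tau$, observe that the transferred cost $\theta\bigl(\tfrac{a}{b}\theta^{-1}(t_0^2+\cdot)\bigr)$ becomes linear-plus-constant for $a=\kappa_2 b$, then run a Maurey-type argument on $\tau$ --- is indeed the spirit of \cite{GRSST15}. But your diagnosis of the ``main obstacle'' contains a slip: $U_\mu$ is globally non-decreasing by construction, not merely on half-lines. The genuine issue is that $\varphi\circ U_\mu$ need not be \emph{convex} even when $\varphi$ is, so you cannot simply invoke Proposition~\ref{prop:weak-dual} on $(\RR,\tau)$ after transferring. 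The route actually taken in \cite{GRSST15} (and reflected in this paper's Lemma~\ref{lem1-attempt2}, cited from \cite{GozW2}) sidesteps this by passing through an intermediate integrability criterion of the form
\[
\sup_{x\ge m}\frac{1}{\mu((x,\infty))}\int_x^\infty e^{\beta(k(u-x))}\,d\mu(u)\le K,
\]
which is what links $\Delta_\mu$ to the transport-entropy inequality without ever needing convexity of the transferred test function. Your ramp-function approach for (i)$\Rightarrow$(ii) is viable and would recover $\kappa_1$; the integrability-criterion route encodes the same information in dual form.
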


The goal of this section is to provide a proof of the following stronger version of the above proposition, where $\Tbarbf(\theta(a\cdot))$ is replaced by the formally weaker inequality $\Tbarbf^-(\theta(a\cdot))$.

\begin{proposition}\label{prop:tr-U}
The following conditions are equivalent.
\begin{enumerate}[(i)]
\item There exists $a>0$ such that $\mu$ satisfies $\Tbarbf^-(\theta(a\cdot))$.
\item There exists $b>0$ such that for all $h>0$ we have
\begin{equation*}
\Delta_\mu(h) \leq \frac{1}{b}\theta^{-1}(t_0^2 + h).
\end{equation*}
\end{enumerate}
The dependence of the constants is the following: (i) implies (ii) with
\begin{equation*}
b=\frac{\min(a,1)}{16}\Bigl(1+\frac{1}{at_0}\theta^{-1}\Bigl(\frac{\log (2e^{C_\theta/2}-1)}{2}\Bigr)\Bigr)^{-1},
\end{equation*}
where $C_\theta = \int_0^\infty \theta(2+\frac{1}{\log 2}t)e^{-t}dt$, and (ii) implies (i) with $a=\kappa b$, where $\kappa = \frac{\min(1,t_0)}{210\theta^{-1}(2+t_0^2)}$.
\end{proposition}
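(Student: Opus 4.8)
The plan is to deduce Proposition~\ref{prop:tr-U} from Proposition~\ref{prop:tr-U-GRSST15} by showing that the two directions can be handled separately, and that the (seemingly) weaker hypothesis $\Tbarbf^-(\theta(a\cdot))$ in condition (i) of the present proposition already forces the full two-sided inequality $\Tbarbf(\theta(a'\cdot))$ for a possibly smaller constant $a'$. The implication (ii) $\implies$ (i) is immediate: by Proposition~\ref{prop:tr-U-GRSST15}, (ii) implies that $\mu$ satisfies $\Tbarbf(\theta(a\cdot))$ with $a=\kappa b$ (and $\kappa$ as stated), and $\Tbarbf(\theta(a\cdot))$ trivially implies $\Tbarbf^-(\theta(a\cdot))$ since the latter is one of the two constituent inequalities. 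So the entire content of the proposition is the implication (i) $\implies$ (ii), with the quantitative constant claimed.

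First I would record the elementary geometric consequences of $\Tbarbf^-(\theta(a\cdot))$ via its dual formulation, Proposition~\ref{prop:weak-dual}~(a): testing inequality~\eqref{eq:Tminus-dual} (with $\theta$ replaced by $\theta(a\cdot)$, equivalently with the $Q_1$-operator built from $\theta(a\cdot)$) against well-chosen convex test functions $f$ —- typically $f(x)=\lambda(x-x_0)_+$ or $f(x)=\lambda(x_0-x)_+$ for a threshold $x_0$ and slope $\lambda>0$ —- yields bounds comparing $\mu((-\infty,x_0])$ and $\mu([x_0+\text{(something)},\infty))$, i.e.\ a one-sided tail-decay estimate of the type that appeared in Remark~\ref{rem:equiv-ii}, condition (ii'). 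The key observation is that $\Tbarbf^-$ controls how fast $\mu$ can concentrate mass \emph{as we move to the right past a level set}, and via the transport map $U_\mu$ this translates, exactly as in the computation in Remark~\ref{rem:equiv-ii}, into an upper bound on the \emph{right-hand} increments $U_\mu(x+h)-U_\mu(x)$ for $x\geq$ (median level). The exponential integrability of $\mu$ on \emph{both} tails, which one also extracts from the dual formulation tested with $x\mapsto s|x|$ (cf.\ the proof of Proposition~\ref{prop:tr-LSI} (i) $\implies$ (ii)), is what lets us also control the behaviour near $-\infty$; this is where the constant $C_\theta=\int_0^\infty\theta(2+\tfrac{1}{\log 2}t)e^{-t}\,dt$ and the term involving $\theta^{-1}\bigl(\tfrac{1}{2}\log(2e^{C_\theta/2}-1)\bigr)$ in the claimed value of $b$ will enter.

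The main obstacle —- and the real work —- is that $\Delta_\mu(h)$ is a \emph{supremum over all} $x\in\RR$ of $U_\mu(x+h)-U_\mu(x)$, including intervals straddling the median and intervals far out in the left tail, whereas $\Tbarbf^-$ most naturally gives information about rightward increments on the upper half. So I would split $\RR$ into the region $x\geq m'$ (for a suitable reference point $m'$ tied to the median, e.g.\ $U_\mu(0)=m$) and the region $x+h\leq m'$, and treat them separately: on the upper region the argument is the direct translation of (ii') described above; on the lower region one applies the same reasoning to the reflected measure, using that $\Tbarbf^-(\theta(a\cdot))$ for $\mu$ does \emph{not} automatically give $\Tbarbf^-$ for the reflection of $\mu$ —- this asymmetry is precisely why $\Tbarbf^-$ is formally weaker than $\Tbarbf$ —- and here one must instead fall back on the two-sided exponential integrability, bounding the left increments crudely but with an explicit constant. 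Finally one checks that the worst of the two regional bounds, combined with the elementary relation $\theta^{-1}(t_0^2+h)\asymp \theta^{-1}(h)$ for $h$ bounded away from $0$ and the quadratic behaviour of $\theta$ near $0$, collapses to the single clean estimate $\Delta_\mu(h)\leq \tfrac1b\theta^{-1}(t_0^2+h)$ with $b$ as in the statement; keeping the constants sharp through the case split (the factors $16$, $\min(a,1)$, and the $1/(at_0)$ scaling) is the fiddly part, but it is bookkeeping rather than a conceptual difficulty. Throughout, wherever possible I would quote the relevant estimates already established inside the proof of Proposition~\ref{prop:tr-U-GRSST15} in~\cite{GRSST15}, since the (i) $\implies$ (ii) direction there uses only $\Tbarbf^-$-type information on one side anyway —- the point of the present proposition is essentially to notice and make explicit that this half of the GRSST15 argument never used $\Tbarbf^+$.
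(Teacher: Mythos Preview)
Your plan contains a genuine conceptual error that would derail the argument. You assert that ``$\Tbarbf^-(\theta(a\cdot))$ for $\mu$ does not automatically give $\Tbarbf^-$ for the reflection of $\mu$'' and that ``this asymmetry is precisely why $\Tbarbf^-$ is formally weaker than $\Tbarbf$.'' This is false. Since the cost $\theta(|x-y|)$ and the relative entropy are both invariant under the reflection $x\mapsto -x$, the reflected measure $\tilde\mu(A)=\mu(-A)$ satisfies $\Tbarbf^-(\theta(a\cdot))$ if and only if $\mu$ does; equivalently, in the dual formulation~\eqref{eq:Tminus-dual}, convexity of the test function is preserved under $x\mapsto -x$. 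The asymmetry between $\Tbarbf^-$ and $\Tbarbf^+$ concerns \emph{which slot of the barycentric transport cost $\calTbar_\theta(\cdot|\cdot)$ the measure $\mu$ occupies}, not left versus right tails on $\RR$. Hence your proposal to handle the lower tail by ``falling back on two-sided exponential integrability'' with a ``crude'' bound is unnecessary---and would in any case not produce the stated constant $b$.

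The paper's actual proof is also structurally different from your outline. Instead of linear test functions $\lambda(x-x_0)_+$, it plugs the convex function $f(t)=\theta(a[t-x]_+)$ itself into the dual formulation and computes $Q_1^{\theta(a\cdot)}f(t)=2\theta(a(t-x)/2)$ exactly for $t>x$, obtaining an inequality of the form
\[
\int_{x}^{\infty} e^{\beta(k(t-x))}\,\mu(dt)\;\leq\; \exp\Bigl(\int_x^\infty \theta(a(t-x))\,\mu(dt)\Bigr)-1+\mu((x,\infty)),
\]
with $\beta(u)=2\theta(a[u-t_0]_+)$ and $k=1/2$. The exponent on the right is controlled by a separate Lemma~\ref{lem2-attempt2} (proved by first deriving the convex Poincar\'e inequality from $\Tbarbf^-$, then combining it with a preliminary linear bound $\Delta_\mu(h)\le D_1+D_2 h$ \`a~la Bobkov--G\"otze), yielding $\int_x^\infty \theta(a(t-x))\,\mu(dt)\le C_\theta\,\mu((x,\infty))$; this is where the constant $C_\theta$ enters. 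The passage from the resulting exponential-integral condition to the bound on $\Delta_\mu(h)$ goes through Lemma~\ref{lem1-attempt2}, a characterization from~\cite{GozW2}. The region $x\le m$ is handled \emph{by the identical argument with reflected test functions}, not by a separate cruder method. Finally, your closing suggestion that the proof amounts to observing that the GRSST15 argument for (i)$\Rightarrow$(ii) ``never used $\Tbarbf^+$'' is not what happens: the constant in Proposition~\ref{prop:tr-U} differs from that in Proposition~\ref{prop:tr-U-GRSST15} precisely because a new argument is given, not a re-reading of the old one.
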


For the proof we need the following lemma which follows immediately from~\cite[Theorem~2.2]{GozW2} (cf. \cite[Theorem~6.1]{GRSST15}), where the connection between the condition (ii) satisfied by the map $U_\mu$ and transport-entropy inequalities connected to transport cost which are equal to zero in a neighborhood of zero is explained in detail. In what follows the notation $\int_{t_1}^{t_2}$ always denotes an integral over the \emph{open} interval $(t_1,t_2)$.

\begin{lemma}\label{lem1-attempt2}
Let $\beta:[0,\infty)\to[0,\infty)$ be a function which is equal to zero on the interval $[0,t_0]$ and then strictly increasing; denote its inverse by $\beta^{-1}:[0,\infty)\to[t_0,\infty)$. The following conditions are equivalent.
\begin{enumerate}[(i)]
\item There exists $d>0$ such that for all $h>0$ and $x\in\RR$,
\begin{equation*}
\Delta_\mu(h)\leq \tfrac{1}{d}\beta^{-1}(h).
\end{equation*}
\item There exist $k>0$, $K<\infty$ such that
\begin{align*}
\sup_{x\in[m, t_{\mu})}\frac{1}{\mu((x,\infty))}\int_x^{\infty} \exp\bigl(\beta\bigl(k(u-x)\bigr) \bigr) \mu(du)\leq K,\\
\sup_{x\in(s_\mu, m]} \frac{1}{\mu((-\infty,x))}\int_{-\infty}^x \exp\bigl(\beta\bigl(k(x-u)\bigr) \bigr) \mu(du)\leq K.
\end{align*}
\end{enumerate}
The dependence of the constants is the following: (i) implies (ii) with $K=3$ and $k=d\frac{t_0}{18\beta^{-1}(2)}$; (ii) implies (i) with $d = k \frac{t_0}{4\beta^{-1}(\log K)}$.
\end{lemma}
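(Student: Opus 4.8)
The plan is to translate both conditions into quantitative tail estimates for $\mu$ and compare them directly; equivalently, as the authors remark, the statement can be read off from \cite[Theorem~2.2]{GozW2}, which identifies (i) with a transport--entropy inequality for $\mu$ attached to a cost vanishing on a neighbourhood of the origin, whose Bobkov--G\"otze dual is exactly the pair of bounds in (ii). I describe the hands-on route, and first reduce to the case that $\mu$ is absolutely continuous with a strictly positive density: replacing $\mu$ by $\mu*\mathcal{N}(0,\sigma^2)$ and letting $\sigma\to0^+$ perturbs $\Delta_\mu(h)$ and the integrals in (ii) arbitrarily little (the relevant suprema are attained on a bounded range of arguments, where $U_{\mu*\mathcal N(0,\sigma^2)}\to U_\mu$), while making $F_\mu$, hence $U_\mu=F_\mu^{-1}\circ F_\tau$, a continuous increasing bijection. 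In that case $U_\mu^{-1}=F_\tau^{-1}\circ F_\mu$, and since $\tau((y,\infty))=\tfrac12 e^{-y}$ and $\tau((-\infty,-y))=\tfrac12 e^{-y}$ for $y\ge0$, the ``$\tau$-distance'' $U_\mu^{-1}(y')-U_\mu^{-1}(y)$ between two points on the right of the median $m$ equals $\log\frac{\mu((y,\infty))}{\mu((y',\infty))}$, and on the left equals $\log\frac{\mu((-\infty,y))}{\mu((-\infty,y'))}$. Using $\beta(v)\le h\iff v\le\beta^{-1}(h)$, this shows that (i) is equivalent, up to a harmless constant factor, to the pair of tail inequalities
\begin{align*}
(\mathrm R)\colon\quad &\mu((y',\infty))\le e^{-\beta(d(y'-y))}\mu((y,\infty)),\qquad m\le y\le y',\\
(\mathrm L)\colon\quad &\mu((-\infty,y'))\le e^{-\beta(d(y-y'))}\mu((-\infty,y)),\qquad y'\le y\le m,
\end{align*}
the ``straddling'' comparison (endpoints on opposite sides of $m$) being obtained by splitting at $m$ and adding the two one-sided estimates, which is what produces the mentioned constant factor.

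\emph{Step 1: $\mathrm{(i)}\Rightarrow\mathrm{(ii)}$.} Assume (R). Iterating it gives $\mu((x+nr,\infty))\le e^{-n\beta(dr)}\mu((x,\infty))$ for $x\ge m$, $n\in\NN$, $r>0$. On the other hand, writing $e^{a}=1+\int_0^{a}e^{s}\,ds$ and using $\beta(k(u-x))>s\iff u>x+\beta^{-1}(s)/k$, one obtains for $x\ge m$ the layer-cake identity
\begin{equation*}
\int_x^\infty e^{\beta(k(u-x))}\,\mu(du)=\mu((x,\infty))+\int_0^\infty e^{s}\,\mu\Bigl(\bigl(x+\tfrac{\beta^{-1}(s)}{k},\infty\bigr)\Bigr)\,ds.
\end{equation*}
If $k\le d/3$, put $N=\lfloor d/k\rfloor\ge3$ and $r=\beta^{-1}(s)/(Nk)$; then $Nr=\beta^{-1}(s)/k$ and $dr\ge\beta^{-1}(s)$, so $\beta(dr)\ge s$ and the iterated bound yields $\mu((x+\beta^{-1}(s)/k,\infty))\le e^{-Ns}\mu((x,\infty))$. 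Inserting this,
\begin{equation*}
\int_x^\infty e^{\beta(k(u-x))}\,\mu(du)\le\Bigl(1+\int_0^\infty e^{(1-N)s}\,ds\Bigr)\mu((x,\infty))\le 2\,\mu((x,\infty)),
\end{equation*}
and (L) controls the left integral the same way. This gives (ii) with $K=2$, $k=d/3$, which is stronger than the stated constants.

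\emph{Step 2: $\mathrm{(ii)}\Rightarrow\mathrm{(i)}$.} Chebyshev's inequality applied to (ii) gives, for $y'\ge y\ge m$, $e^{\beta(k(y'-y))}\mu((y',\infty))\le\int_y^\infty e^{\beta(k(u-y))}\mu(du)\le K\mu((y,\infty))$, hence $\mu((y',\infty))\le Ke^{-\beta(k(y'-y))}\mu((y,\infty))$, and symmetrically on the left. To upgrade this to (R)/(L) with a smaller constant $d$, note that (R) with exponent $\beta(d\,\cdot)$ is trivial once $d(y'-y)\le t_0$ (it then just asserts $\mu((y',\infty))\le\mu((y,\infty))$), while for $d(y'-y)>t_0$ it suffices that $\beta\bigl(k(y'-y)\bigr)-\beta\bigl(d(y'-y)\bigr)\ge\log K$. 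Since $\beta$ is convex with $\beta(t_0)=0$ --- in the application $\beta=(\theta-t_0^2)_+$, so that $\beta^{-1}(h)=\theta^{-1}(t_0^2+h)$ --- the map $t\mapsto\beta(kt)-\beta(dt)$ is nondecreasing for $d\le k$, so its infimum over $\{t:dt>t_0\}$ is attained at $t=t_0/d$, where it equals $\beta(kt_0/d)$; choosing $d$ small enough that $kt_0/d\ge\beta^{-1}(\log K)$ makes this $\ge\log K$, which gives (R), and likewise (L). Combined with the reformulation above this yields (i) (the straddling comparison across $m$ costing the usual factor), with $d$ of the order $k\,t_0/\beta^{-1}(\log K)$, matching the claimed dependence.

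\emph{Main obstacle.} The analytic input --- the layer-cake identity and Chebyshev --- is routine; the delicate points are the bookkeeping of the constants and the boundary effects: (a) the behaviour of $U_\mu$ near the median, whose jump there reconciles the $h\to0^+$ regime of (i) with the ``no large gap at $m$'' content of (ii) and is responsible for the factor $t_0/\beta^{-1}(\log K)$ in $d$; (b) the straddling comparison across $m$, which costs a fixed constant because $\beta^{-1}$ is only concave, not subadditive; and (c) making the generalized-inverse identities of the first paragraph precise, cleanly handled by the density reduction. The convexity of $\beta$ (automatic in the application) is what makes Step~2 go through with a clean constant; matching the \emph{exact} constants stated in the lemma is the fiddliest point and is where one should follow \cite{GozW2}.
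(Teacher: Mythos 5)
Your two computational steps (the layer-cake identity with iterated tail bounds, and the Chebyshev bound with absorption of $K$) are fine in themselves, and since the paper does not prove this lemma at all but quotes it from \cite[Theorem~2.2]{GozW2}, a self-contained argument would be valuable; however, as written the proof has two genuine gaps, both affecting the implication (ii)$\Rightarrow$(i), which is precisely the direction used in the proof of Proposition~\ref{prop:tr-U}. First, the reduction to absolutely continuous measures by convolving with $\mathcal{N}(0,\sigma^2)$ does not work. If $\beta$ grows faster than quadratically --- which is allowed by the lemma and actually occurs in the application, where $\beta(u)=2\theta(a[u-t_0]_+)$ with $\theta=H^*$ growing like $t^{\alpha/(\alpha-1)}$, an exponent that can be any number $\geq 2$ --- then $\int e^{\beta(k(u-x))}\,d\bigl(\mu*\mathcal{N}(0,\sigma^2)\bigr)(u)=+\infty$ for every $\sigma>0$, so condition (ii) is destroyed rather than ``perturbed arbitrarily little''. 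Moreover the suprema in (ii) and in $\Delta_\mu(h)$ range over unbounded sets of $x$ and need not be attained, and you give no argument that a bound on $\Delta_\mu$ transfers to $\Delta_{\mu*\mathcal{N}(0,\sigma^2)}$ with constants uniform in $\sigma$. The fix is to drop the smoothing altogether: the passage between (i) and your tail inequalities (R), (L) can be carried out for arbitrary $\mu$ directly from the generalized-inverse inequalities $F_\mu(U_\mu(x)-\eps)\leq F_\tau(x)\leq F_\mu(U_\mu(x))$, exactly in the spirit of the paper's Lemma~\ref{lem:const-FMNW15}; this loses no constants.

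Second, Step 2 silently strengthens the hypothesis: the monotonicity of $t\mapsto\beta(kt)-\beta(dt)$, which is what lets you absorb the factor $K$ after a single Chebyshev application, uses convexity of $\beta$, whereas the lemma assumes only that $\beta$ vanishes on $[0,t_0]$ and is then strictly increasing. For such general $\beta$ the one-step Chebyshev route cannot give the stated conclusion: it would require an inequality of the type $\beta^{-1}(h+\log K)\leq \tfrac{C}{t_0}\beta^{-1}(\log K)\,\beta^{-1}(h)$ for all $h>0$, which fails already for $\beta^{-1}(h)=t_0+h^{10}$; in the general case one must exploit the full exponential-integral condition (this is where the proof in \cite{GozW2} is genuinely more subtle). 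Since in the paper's application $\beta$ is convex, your argument does yield the special case needed for Proposition~\ref{prop:tr-U} (once the smoothing step is replaced as above), but it does not prove the lemma as stated. Step 1, by contrast, needs no convexity and its constants are even better than the stated ones, but it too currently rests on the faulty smoothing reduction for the passage from (i) to (R) and should be rewritten with the direct generalized-inverse argument.
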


We also need the following preparatory result.

\begin{lemma}\label{lem2-attempt2}
If $\mu$ satisfies $\Tbarbf^-(\theta(a\cdot))$, then
\begin{align*}
\frac{1}{\mu((x,\infty))}\int_x^\infty \theta(a(u-x))\mu(du)\leq C_\theta &\quad  \text{for } \ x\in[m, t_{\mu}),\\
\frac{1}{\mu((-\infty,x))}\int_{-\infty}^{x} \theta(a(x-u))\mu(du)\leq C_\theta &\quad  \text{for }  x\in(s_\mu, m],
\end{align*}
where $C_\theta=\int_0^\infty \theta(2+\frac{1}{\log 2}t)e^{-t}dt$. Moreover, if $\theta (x)=x^2$ then one can choose $C_\theta=1$.
\end{lemma}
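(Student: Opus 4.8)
The plan is to use the dual formulation of $\Tbarbf^-(\theta(a\cdot))$ from Proposition~\ref{prop:weak-dual}~(a), testing it on a well-chosen family of convex functions that are affine on a half-line and constant on the complementary half-line. Fix $x\in[m,t_\mu)$. The natural test function is $f = f_{x,\lambda}$, given by $f_{x,\lambda}(y) = \lambda\,(y-x)_+ = \lambda\max\{y-x,0\}$ for a parameter $\lambda>0$; this is convex and bounded below by $0$. First I would compute $Q_1^{\theta(a\cdot)} f_{x,\lambda}$. Since $f$ is nondecreasing and we are infimizing $f(z)+\theta(a|y-z|)$, for $y\geq x$ the infimum is attained at some $z\in[x,y]$, and one gets a genuine inf-convolution of a linear function with $\theta(a\cdot)$; for $y\leq x$ one has $f\equiv 0$ there so $Q_1 f(y)=0$. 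The key point is that $Q_1^{\theta(a\cdot)} f_{x,\lambda}(y) \geq$ (something comparable to $\theta(a(y-x))$) once $\lambda$ is large enough — more precisely, by choosing $\lambda$ large the linear part dominates and the inf-convolution on $[x,\infty)$ stays close to $f$ itself minus a bounded correction, while still being bounded below by a multiple of $\theta(a(y-x))$ on a suitable range; alternatively, and more cleanly, one picks the specific slope $\lambda$ that makes the computation of $Q_1 f$ explicit. I would then plug into \eqref{eq:Tminus-dual}:
\[
\int_\RR \exp\bigl(Q_1^{\theta(a\cdot)} f_{x,\lambda}\bigr)\, d\mu \;\leq\; \exp\Bigl(\int_\RR f_{x,\lambda}\, d\mu\Bigr).
\]
The right-hand side is $\exp\bigl(\lambda\int_\RR (y-x)_+\,d\mu(y)\bigr)$, and since $x\geq m$ we have $\mu((x,\infty))\leq 1/2$, so by the exponential integrability (which is built into the hypothesis via the constraint on $\theta$ and the standing assumption $\int_0^\infty \theta(d_1 t+d_2)e^{-t}\,dt<\infty$, guaranteeing first moments) this integral is finite and, crucially, can be controlled: one shows $\int_\RR(y-x)_+\,d\mu(y)$ is small enough relative to the median constraint, giving the explicit constant $\frac{1}{\log 2}$ and the shift $2$ that appear in $C_\theta$.

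The heart of the estimate is the lower bound on the left-hand side: restricting the integral to $(x,\infty)$ and using the lower bound $Q_1^{\theta(a\cdot)} f_{x,\lambda}(y)\geq \theta(a(y-x)) - (\text{const})$ valid for $y>x$ (the constant absorbing the bounded error from the inf-convolution), we obtain
\[
\int_x^\infty \exp\bigl(\theta(a(y-x))\bigr)\, d\mu(y) \;\lesssim\; \exp\Bigl(\lambda\!\int_x^\infty (y-x)\, d\mu(y)\Bigr)\cdot(\text{const}).
\]
Then, using $e^u\geq u$ (or $e^u \geq 1+u$) on the left and dividing through by $\mu((x,\infty))$, one recovers $\frac{1}{\mu((x,\infty))}\int_x^\infty \theta(a(y-x))\,d\mu(y)\leq C_\theta$. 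To pin down the precise value $C_\theta=\int_0^\infty\theta(2+\frac{1}{\log 2}t)e^{-t}\,dt$, I would instead argue more directly: from $\Tbarbf^-(\theta(a\cdot))$ one deduces (again via the dual form, or via the known equivalence with a concentration/deviation inequality for half-lines, cf. the computation in Remark~\ref{rem:equiv-ii}) that the conditional tail $\mu((x+s,\infty))/\mu((x,\infty))$ decays at least like $e^{-(\log 2)\,s/??}$ after a shift, i.e. $\mu((x,\infty))\leq \tfrac12 e^{-(\log 2)(\text{dist})}$-type bound; integrating $\theta(a(y-x))$ against this explicit exponential tail via the layer-cake formula yields exactly the stated integral. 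The symmetric statement for $x\in(s_\mu,m]$ follows by applying the same argument to the reflected measure $\check\mu$ (the pushforward of $\mu$ under $y\mapsto -y$), which also satisfies $\Tbarbf^-(\theta(a\cdot))$ since the cost is symmetric and the class of convex functions is preserved.

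The case $\theta(x)=x^2$ deserves the sharper constant $C_\theta=1$. Here the inf-convolution $Q_1 f_{x,\lambda}$ with the quadratic cost $a^2|\cdot|^2$ is completely explicit (the classical Hopf--Lax/Moreau envelope: the infimum of $\lambda z + a^2 z^2$-type expressions is a truncated quadratic), so the bounded error term vanishes or can be optimized away exactly, and testing \eqref{eq:Tminus-dual} with the optimal slope $\lambda = 2a^2(y-x)$-direction gives precisely $\int_x^\infty e^{a^2(y-x)^2}d\mu\leq \exp(\int_\RR a^2((y-x)_+)^2\,d\mu)$; combined with $\int_\RR((y-x)_+)^2 d\mu \leq \mu((x,\infty))\cdot\text{(something)}$ bounded by the median condition, one extracts the clean bound with constant $1$.

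The main obstacle I anticipate is getting the \emph{explicit} constants right — in particular verifying that the inf-convolution $Q_1^{\theta(a\cdot)} f_{x,\lambda}$ really does dominate $\theta(a(\cdot-x))$ up to the precise bounded correction, and then matching the bookkeeping (the shift $2$, the factor $\frac{1}{\log 2}$) so that the final bound is literally $C_\theta = \int_0^\infty \theta(2 + \tfrac{1}{\log 2}t)e^{-t}\,dt$ rather than merely $O(1)$. The qualitative statement is a routine consequence of the dual inequality; it is the quantitative tracking of constants, driven by the median-forces-$\mu((x,\infty))\leq \tfrac12$ input and the resulting exponential tail bound, that requires care.
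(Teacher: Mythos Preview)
Your primary approach has a genuine gap. You claim that for $f_{x,\lambda}(y)=\lambda(y-x)_+$ one has $Q_1^{\theta(a\cdot)} f_{x,\lambda}(y)\geq \theta(a(y-x)) - \text{const}$ for $y>x$, but this is false in general. Taking $z=y$ in the infimum defining $Q_1$ already shows $Q_1^{\theta(a\cdot)} f_{x,\lambda}(y)\leq f_{x,\lambda}(y)=\lambda(y-x)$, which grows only linearly in $y-x$; whenever $\theta$ is superlinear (e.g.\ $\theta(t)=t^2$) this cannot dominate $\theta(a(y-x))$ up to an additive constant. The inf-convolution of a piecewise-linear function with a convex cost is asymptotically linear, not $\theta$-shaped, so the ``heart of the estimate'' as you wrote it does not go through. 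Likewise, in the quadratic case your ``optimal slope $\lambda=2a^2(y-x)$'' depends on the integration variable $y$, which is not a legitimate choice of test function.

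The paper proceeds differently, and in fact your parenthetical alternative (``deduce a conditional tail bound and integrate via layer-cake'') \emph{is} the paper's route for general $\theta$. Concretely: from the dual form of $\Tbarbf^-(\theta(a\cdot))$ one first extracts the convex Poincar\'e inequality $\Var_\mu(f)\leq \tfrac{1}{2a^2}\int|f'|^2\,d\mu$ by a Taylor expansion (plug $\eps f$, use $Q_1(\eps f)\geq \eps f - \theta^*(\eps|f'|)$, let $\eps\to 0$). Testing Poincar\'e with $(u-x)_+$ and iterating gives $\Delta_\mu(h)\leq \tfrac{2}{a}+\tfrac{1}{a\log 2}h$ (this is Lemma~\ref{lem:const-FMNW15}; the constants $2$ and $\tfrac{1}{\log 2}$ you were looking for arise here). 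Then one writes $\int_x^\infty \theta(a(u-x))\,\mu(du)$ as an integral against $\tau$ via the transport map $U_\mu$, bounds $U_\mu(u)-U_\mu(v)\leq \tfrac{2}{a}+\tfrac{u-v}{a\log 2}$, and changes variables to obtain exactly $C_\theta=\int_0^\infty \theta(2+\tfrac{t}{\log 2})e^{-t}\,dt$.

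For $\theta(x)=x^2$ the paper uses a separate, sharper argument that avoids the tail-integration step entirely: the Poincar\'e inequality applied to $(u-x)_+$ gives $A-B\leq \tfrac12\mu((x,\infty))$ with $A=\int_x^\infty a^2(u-x)^2\,d\mu$ and $B=\bigl(\int_x^\infty a(u-x)\,d\mu\bigr)^2$; Cauchy--Schwarz yields $B\leq A\,\mu((x,\infty))$, and since $\mu((x,\infty))\leq \tfrac12$ one solves to get $A/\mu((x,\infty))\leq 1$. This is how $C_\theta=1$ is obtained.
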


\begin{proof}[Proof of Lemma~\ref{lem2-attempt2}] First note that $\mu$ satisfies the convex Poincar\'e inequality:
for any (smooth) convex Lipschitz $f:\RR\to\RR$ we have
\begin{equation}\label{eq:convex-P}
 \Var_{\mu}(f) \leq  \frac{1}{2a^2} \int_{\RR} |f'|^2 d\mu.
\end{equation}
This follows by a standard Taylor expansion argument from the dual formulation of the transport entropy inequality (see Proposition~\ref{prop:weak-dual}~(a)): we plug $\eps f$ instead of $f$, use the estimate
\begin{align*}
Q_1^{\theta} (\eps f)(x)
&= \inf_{y\in\RR} \{ \eps f(x-y) + \theta(y) \} \geq \eps f(x) +\inf_{y\in\RR} \{ -\eps f'(x)y + \theta(y )\} \\
&\geq \eps f(x) - \theta^*(\eps |f'(x)|)
\end{align*}
(valid for convex functions), and take $\eps\to 0^+$ (alternatively, one could use Proposition~\ref{prop:tr-LSI} and deduce the Poincar\'e inequality with a slightly worse constant from the log-Sobolev inequality by a similar argument).

We only need to prove the first inequality (where $x\in[m, t_{\mu})$), the second one can be taken care of in a similar way.

First, we deal with the case $\theta(x):= x^2$. Inequality~\eqref{eq:convex-P} implies that
\begin{equation*}
 A - B \leq \frac{1}{2}\mu((x,\infty)),
\end{equation*}
where $A=\int_x^\infty (a(u-x))^2\mu(du)$ and $B=\left(\int_x^\infty a(u-x)\mu(du)\right)^2$. (This is obtained by testing~\eqref{eq:convex-P} with $u\mapsto \max\{u-x, 0 \}$ --- note that even though $\mu$ can have atoms there are no problems non-differentiability at $u=x$, see \cite[Proposition~4.6]{GozW2}.) By the Cauchy-Schwarz inequality $B\leq A \mu((x,\infty))$ and thus
\begin{equation*}
 A\leq \frac{1}{2}\mu((x,\infty))+ A \mu((x,\infty)),
\end{equation*}
which leads to
\begin{equation*}
 \frac{1}{\mu((x,\infty))}\int_x^\infty \theta(a(u-x))\mu(du)\leq \frac{1}{2(1-\mu((x,\infty)))}\leq 1.
\end{equation*}

Now we turn to the general $\theta$. By the characterization of Bobkov and G{\"o}tze~\cite[Theorem~4.2]{MR1701522} (cf.~\cite[Theorem~1.5]{GRSST15}), there exist $D_1, D_2 > 0$ such that $\Delta_\mu(h)\leq D_1+D_2h$ for all $h\geq0$. Following the proof of~\cite[Theorem~1]{FMNW15} (see Lemma~\ref{lem:const-FMNW15} below for more details), we see that one can choose $D_1=\frac{2}{a}$ and $D_2=\frac{1}{a\log 2}$.

Fix $x\geq m$ and define $v:=\sup\{u : U_{\mu}(u)\leq x\}$. Since the map $U_{\mu}$ is left-continuous we have $U_{\mu}(v) \leq x < U_{\mu}(v+\eps)$. Recall that $\tau$ denotes the exponential measure and that
\begin{equation*}
\int_{\RR} f d\mu =  \int_{\RR} f\circ U_{\mu} d\tau
\end{equation*}
for any measurable function $f$. Thus (note that $\mu((U_{\mu}(v),x]) =0$ if  we have  $U_{\mu}(v) < x$)
\begin{align*}
\frac{1}{\mu((x,\infty))}\int_x^\infty \theta(a(u-x))\mu(du)
&=\frac{1}{\tau((v,\infty))}\int_{v}^\infty \theta\bigl(a(U_{\mu}(u)-x)\bigr)\tau(du)\\
&\leq e^{v}\int_v^\infty \theta\bigl(a(U_{\mu}(u)-U_{\mu}(v))\bigr)e^{-u}du\\
& \leq \int_v^\infty \theta\bigl(a(D_1 +D_2(u-v))\bigr)e^{-(u-v)}du\\
&= \int_0^\infty \theta(a(D_1+D_2t))e^{-t}dt \\
&=\int_0^\infty \theta\bigl(2+\frac{1}{\log 2}t\bigr)e^{-t}dt = C_\theta < \infty,
\end{align*}
where the last inequality follows from the conditions placed on $\theta$.
\end{proof}

Let us clarify here what we mean by ``Following the proof of \cite[Theorem~1]{FMNW15}'' above, especially since in the proof of~\cite[Theorem~1]{FMNW15} the reference measure $\mu$ is assumed to be symmetric and the function $U_{\mu}$ does not appear explicitly.

\begin{lemma}\label{lem:const-FMNW15} We have $U_{\mu}(x+h) - U_{\mu}(x) \leq \frac{4}{a} + \frac{1}{a\log(2)}h $, but the constant $4/a$ may be replaced by $2/a$ if we know that $x$ and $x+h$ are of the same sign. Thus in the above proof one can take $D_1=\frac{2}{a}$ and $D_2=\frac{1}{a\log 2}$.
\end{lemma}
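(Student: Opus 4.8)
The plan is to extract explicit constants from the argument of Feldheim, Marsiglietti, Nayar, and Wang by translating their symmetric-measure computation into a statement about the transport map $U_\mu$. The starting point is the observation (made in Remark~\ref{rem:equiv-ii}) that for $x,h\geq 0$ the defining property of $U_\mu$ gives
\begin{equation*}
\mu\bigl([U_\mu(x+h),\infty)\bigr) = \tau\bigl([x+h,\infty)\bigr) = e^{-h}\tau\bigl([x,\infty)\bigr) = e^{-h}\mu\bigl([U_\mu(x),\infty)\bigr),
\end{equation*}
and the analogous identity $\mu\bigl((-\infty,U_\mu(-x-h)]\bigr) = e^{-h}\mu\bigl((-\infty,U_\mu(-x)]\bigr)$ for the left tail. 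Thus to bound $U_\mu(x+h)-U_\mu(x)$ when $x\geq 0$ it suffices to control how fast the right tail of $\mu$ must decay, and the hypothesis to use is that $\mu$ satisfies $\Tbarbf^-(\theta(a\cdot))$ — or more conveniently the convex Poincar\'e inequality \eqref{eq:convex-P} derived from it in the proof of Lemma~\ref{lem2-attempt2}.

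First I would recall that the convex Poincar\'e inequality, tested against $u\mapsto\max\{u-x,0\}$ exactly as in the $\theta(x)=x^2$ part of the proof of Lemma~\ref{lem2-attempt2}, yields
\begin{equation*}
\int_x^\infty (u-x)^2\,d\mu \;\leq\; \frac{1}{2a^2}\,\mu((x,\infty)) + \Bigl(\int_x^\infty (u-x)\,d\mu\Bigr)^2 \Big/ \mu((x,\infty)) \cdot \mu((x,\infty)),
\end{equation*}
which after Cauchy--Schwarz gives the bound on the conditional second moment of the right tail; combined with Chebyshev-type reasoning this produces an exponential decay estimate for $\mu((x,\infty))$ of the type used in \cite{FMNW15}. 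Concretely, one shows that $\mu((x+t,\infty))\leq \tfrac12 \mu((x,\infty))$ once $t\geq 2/a$ (from the second-moment bound), and $\mu((x+t,\infty))\leq e^{-1}\mu((x,\infty))$ once $t\geq \tfrac{1}{a\log 2}\cdot(\text{something})$; iterating gives $\mu((x+h,\infty))\leq e^{-h}\mu((x,\infty))$ provided $h \leq a(\text{dist travelled}) $ suitably scaled, which rearranges into $U_\mu(x+h)-U_\mu(x)\leq \tfrac{2}{a}+\tfrac{1}{a\log 2}h$ for $x,x+h\geq 0$. The factor $2/a$ degrades to $4/a$ when $x<0<x+h$ because one must cross the median, splitting the increment into a piece of the left tail (of length at most $2/a$, by the symmetric argument on the left) and a piece of the right tail; adding the two "base" costs gives $4/a$, while the linear-in-$h$ term is unaffected since $h$ is additive across the two pieces.

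The main obstacle is purely bookkeeping: one must run the iteration of the halving/$e^{-1}$-decay estimates carefully enough to get the clean constants $D_1=2/a$, $D_2=1/(a\log 2)$ (rather than some messier expressions), and one must handle the possibility that $\mu$ has atoms, so that $U_\mu$ is only left-continuous and the tails $\mu((x,\infty))$ versus $\mu([x,\infty))$ must be distinguished — exactly the point where one invokes \cite[Proposition~4.6]{GozW2} to justify testing the Poincar\'e inequality against the non-smooth function $u\mapsto\max\{u-x,0\}$. I would also need to check that the same-sign improvement really does follow from applying the right-tail estimate with base point $x\geq 0$ (and the mirror statement with base point $\leq 0$), rather than needing to pass through $0$; this is where the hypothesis $x,x+h$ of the same sign is used. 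Once these points are settled, the conclusion that one may take $D_1=2/a$ and $D_2=1/(a\log2)$ in the proof of Lemma~\ref{lem2-attempt2} — where $x\geq m$ and $v$ is chosen with $U_\mu(v)\leq x$, so that $U_\mu(u)-U_\mu(v)$ for $u\geq v$ is an increment of the form just estimated (and, after possibly enlarging the constant only in a harmless way, one can assume $v\geq 0$ or absorb the crossing into $D_1$) — follows directly, completing the proof.
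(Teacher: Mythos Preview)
Your plan is essentially the paper's own argument: apply the convex Poincar\'e inequality \eqref{eq:convex-P} to $f(u)=\max\{u-x,0\}$, extract a geometric tail-decay estimate, telescope, and then translate back to $U_\mu$ via the identities from Remark~\ref{rem:equiv-ii}. The sign-splitting explanation for the $4/a$ versus $2/a$ distinction is also exactly as in the paper.

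The one concrete numerical claim in your sketch, however, is off by a factor of two in a way that matters for the stated constants. A halving estimate $\mu((x+2/a,\infty))\leq\tfrac12\mu((x,\infty))$ telescopes in steps of $h_0=\log 2$, giving $U_\mu(x+h)-U_\mu(x)\leq\tfrac{2}{a}(1+h/\log 2)=\tfrac{2}{a}+\tfrac{2}{a\log 2}h$, i.e.\ $D_2=\tfrac{2}{a\log 2}$ rather than the required $D_2=\tfrac{1}{a\log 2}$. What the paper actually establishes is the \emph{quartering} estimate $\mu([u+2/a,\infty))\leq\tfrac14\mu([u,\infty))$ for $u\geq m$, by writing $\Var_\mu(f)=\tfrac12\EE(f(X)-f(X'))^2$ with $X,X'$ independent copies and restricting to the event $\{X'\leq m,\ X\geq u+2/a\}$ and its symmetric twin: on each such event one term vanishes and the other is at least $(2/a)^2$, while the median condition supplies a factor $\geq\tfrac12$. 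With decay factor $1/4=e^{-2\log 2}$ one telescopes in steps of $h_0=2\log 2$, giving at most $1+h/(2\log 2)$ steps of size $2/a$, hence exactly $\tfrac{2}{a}+\tfrac{1}{a\log 2}h$. Your Chebyshev-from-second-moment route could in fact also reach $1/4$ at distance $2/a$ (since the conditional second-moment bound from the quadratic case of Lemma~\ref{lem2-attempt2} plus Markov gives $\mu((x+t,\infty))\leq (at)^{-2}\mu((x,\infty))$), but you need to record it as quartering, not halving, to recover the claimed $D_2$.
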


\begin{proof}
Let $X, X'$ be two independent random variables with distribution $\mu$. Fix $u\geq m$ and plug the function $f(x) = \max\{x-u, 0\}$ into~\eqref{eq:convex-P}:
\begin{align*}
 \frac{1}{2a^2}\mu([u, \infty))
 &\geq \Var_{\mu}(f) = \frac{1}{2}\EE (f(X) - f(X'))^2\\
 &\geq \frac{1}{2}\EE (f(X) - f(X'))^2\bigl[ \indbr{X'\leq m}\indbr{X\geq u+2/a} + \indbr{X\leq m}\indbr{X'\geq u+2/a}  \bigr] \\
 &\geq \frac{1}{2} \EE f(X)^2\indbr{X\geq u+2/a} \geq\frac{2}{a^2} \mu([u+2/a,\infty)),
 \end{align*}
Thus,
\begin{equation*}
\mu([u+2/a,\infty))\leq \frac{1}{4}\mu([u,\infty)), \quad u\geq m,
\end{equation*}
and similarly
\begin{equation*}
\mu((-\infty, u-2/a])\leq \frac{1}{4}\mu((-\infty, u]), \quad u\leq m.
\end{equation*}

By the definition of $U_\mu$ (recall that this function is left-continuous) for $x\in \RR$ and all $\eps>0$,
\begin{equation*}
F_\mu(U_\mu(x)-\eps)\leq F_\tau(x)\leq F_\mu(U_\mu(x)).
\end{equation*}
Denote now $h_0= 2\ln(2)$ and let $x, x+h_0\leq 0$. Then, for  $\eps>0$,
\begin{align*}
\mu\bigl((-\infty, U_{\mu}(x+h_0) -2/a-\eps]\bigr)
&\leq \frac{1}{4}\mu\bigl((-\infty, U_{\mu}(x+h_0)-\eps]\bigr)\\
&\leq  e^{-h_0} \tau\bigl((-\infty, x+h_0]\bigr) =  \tau\bigl((-\infty, x]\bigr)\\
&\leq \mu\bigl((-\infty, U_{\mu}(x)]\bigr).
\end{align*}
Hence $U_{\mu}(x+h_0) - U_{\mu}(x) \leq 2/a$ for $x, x+h_0\leq 0$ (since $\eps>0$ was arbitrary). Similarly $U_{\mu}(x+h_0) - U_{\mu}(x) \leq 2/a$ for $x, x+h_0\geq 0$,
since
\begin{align*}
\mu\bigl([ U_{\mu}(x) +2/a +\eps, \infty)\bigr)
&\leq \frac{1}{4}\mu\bigl([U_{\mu}(x)+\eps, \infty)\bigr)
\leq \frac{1}{4}\bigl(1 - F_{\mu}(U_{\mu}(x))\bigr)\\
&\leq e^{-h_0}\bigl(1 - F_{\tau}(x)\bigr)
\leq 1 - F_{\tau}(x+h_0)\\
&\leq \mu\bigl([U_{\mu}(x+h_0)-\eps, \infty)\bigr).
\end{align*}
Using these inequalities in a telescoping manner at most $\lceil h/h_0 \rceil \leq 1 + h/h_0$ times we conclude that
\begin{equation*}
U_{\mu}(x+h) - U_{\mu}(x) \leq \frac{2}{a} + \frac{1}{a\log(2)}h
\end{equation*}
for any $x\in\RR$ and $h\geq 0$ such that $x$ and $x+h$ are of the same sign . If $x<0<x+h$, then the additive constant $2/a$ in the last estimate has to be replaced by $4/a$.
\end{proof}

Now we are ready to prove Proposition~\ref{prop:tr-U}.

\begin{proof}[Proof of Proposition~\ref{prop:tr-U}] Due to Proposition~\ref{prop:tr-U-GRSST15} we only need to check that if $\mu$ satisfies the inequality $\Tbarbf^-(\theta(a\cdot))$, then the condition from (ii) is satisfied by $U_{\mu}$.

Fix $x>m$ and consider the function $f(t)=\theta(a[t-x]_+)$. Then $Q_1^{\theta(a\cdot)}f(t)=0$ if $t\leq x$. For $t>x$,
\begin{align*}
Q_1^{\theta(a\cdot)} f(t)
&=\inf_{y\in\RR} \{\theta(a[y-x]_+)+\theta(a|t-y|)\}\\
&=\inf_{y\in[x,t]} \{\theta(a(y-x))+\theta(a(t-y))\}
= 2\theta(a(t-x)/2),
\end{align*}
where the last equality follows from the fact that we have an inequality due to the convexity of $\theta$ and on the other hand the infimum is attained at $y=(x+t)/2$. Hence the dual formulation of the weak transport inequality (see Proposition~\ref{prop:weak-dual}~(a)) implies that
\begin{equation*}
\mu((-\infty,x])+\int_x^\infty \exp\bigl(2\theta(a(t-x)/2)\bigr)d\mu(t)
\leq \exp\Bigl(\int_x^\infty \theta(a(t-x))\mu(dt)\Bigr).
\end{equation*}

Denote $k=1/2$ and $\beta(u) = 2\theta(a[u-t_0]_+)$ for $u>0$. Since $\theta$ is increasing we have $\beta(k u)\leq 2\theta(au/2)$. Therefore,
\begin{equation}\label{eq2-attempt2}
\int_{x}^{\infty} \exp\bigl(\beta(k(t-x)) \bigr) \mu(dt)
\leq \exp\Bigl(\int_x^\infty \theta(a(t-x))\mu(dt) \Bigr) -1+\mu((x,\infty)).
\end{equation}

By Lemma~\ref{lem2-attempt2} there exists $C_\theta>0$ such that $\int_x^\infty \theta(a(t-x))\mu(dt)\leq C_\theta\mu((x,\infty))$.
Hence, since $\mu((x,\infty))\in[0,1/2]$,
\begin{equation*}
\frac{\int_{x}^{\infty} \exp\bigl(\beta(k(t-x)) \bigr) \mu(dt) }{\mu((x,\infty))}\leq \frac{\exp\bigl(C_\theta\mu((x,\infty))\bigr)-1+\mu((x,\infty))}{\mu((x,\infty))} \leq 2e^{C_\theta/2}-1.
\end{equation*}
One can deal with $x\leq m$ similarly. Since
$$\beta^{-1}(h) = \bigl(t_0+\tfrac{1}{a}\theta^{-1}(h/2)\bigr),$$
 Lemma~\ref{lem1-attempt2} implies that
\begin{align}\label{constant final}
\Delta_\mu(h)
&\leq \tfrac{1}{d}\beta^{-1}(h) =\tfrac{1}{d} \bigl(t_0+\tfrac{1}{a}\theta^{-1}(h/2)\bigr)
\leq \frac{1}{d\min(a,1)}  \bigl(t_0+\theta^{-1}(h/2) \bigr) \nonumber\\
&\leq \frac{2}{d\min(a,1)} \theta^{-1}(t_0^2+h),
\end{align}
where
\begin{equation*}
d= \frac{t_0}{8\beta^{-1}(\log(2e^{C_\theta/2}-1))} =\frac{t_0}{8(t_0+\frac{1}{a}\theta^{-1}(\frac{1}{2}\log(2e^{C_\theta/2}-1)))}
\end{equation*}
(recall that $k=1/2$). This finishes the proof of the implication (i) $\implies$ (ii).
\end{proof}

\section{Summary}
\label{sec:summary}

\subsection{Proof of the main result and dependence of constants for $H(x)=\tfrac{1}{4}x^2$}
The results of the two preceding sections allow us to prove Theorem~\ref{thm:main}. The proof of Corollary~\ref{cor:concentration-x^2} is postponed to the next subsection.

\begin{proof}[Proof of Theorem~\ref{thm:main}] The implication (ii)$\implies$(i) has been proved in \cite{GRSST15}. The implication (i)$\implies$(ii) follows immediately by combining Propositions~\ref{prop:tr-LSI} and~\ref{prop:tr-U}. The only assumption we need to check is that $\int_0^{\infty} \theta(d_1x +d_2)e^{-x}dx<\infty$ for all $d_1, d_2>0$, but this follows from the scaling condition placed on $H$. Indeed, for $s\in(0,1]$,
\begin{equation*}
H^*(y/s) = \bigl( H(s\cdot) \bigr)^*(y) \geq \bigl( A s^{\alpha}H(\cdot) \bigr)^*(y) = As^{\alpha} H^*(y/(A s^{\alpha})).
\end{equation*}
Taking $z\geq 1$ and substituting into the above inequality $s=z^{-1/(\alpha-1)}$ and $y=As =A z^{-1/(\alpha-1)}$ we arrive at $\theta(z) = H^*(z)\leq H^*(A) A^{-1} z^{\alpha/(\alpha-1)}$, which implies the claim.
\end{proof}

As for the dependence of constants, in the case $H(x)=\tfrac{1}{4} x^2$ one can take $A=1$ and $\alpha=2$ in~\eqref{eq:H-scaling-intro}. Let us consider the implication  (i) $\implies$ (ii) from Theorem~\ref{thm:main}.  In Proposition~\ref{prop:tr-LSI} we have $a=1/c$ and moreover we can take $C_{\theta}=1$ in Lemma~\ref{lem2-attempt2}.
Therefore,  inequality~\eqref{constant final} reads
\begin{equation}\label{constantforH}
\Delta_\mu(h)\leq \tfrac{1}{d} \bigl(t_0+c\sqrt{h/2}\bigr) \leq 8\frac{t_0+\frac{2}{3}c}{t_0}\left(t_0+c\sqrt{h/2}\right),
\end{equation}
since
\begin{equation*}
d= \frac{t_0}{8\bigl(t_0+\frac{1}{a}\sqrt{\frac{1}{2}\log(2e^{1/2}-1)})\bigr)}\geq \frac{t_0}{8(t_0+\frac{2}{3}c)}.
\end{equation*}
 Taking $t_0=\frac{2}{3}c$ we obtain the result announced in Remark~\ref{rem:dep-const-intro} (the dependence of constants for the implication (ii) $\implies$ (i)  follows directly from Proposition~\ref{prop:tr-LSI} and Proposition~\ref{prop:tr-U-GRSST15}). In fact, we can take  $t_0=c\sqrt[4]{2h/9}$ (which minimizes the right-hand side of~\eqref{constantforH}) to obtain a slightly better estimate
 \begin{equation*}
 \Delta_\mu(h)\leq 8c\Bigl(2/3+\sqrt{h/2}+2\sqrt[4]{2h/9}\Bigr).
 \end{equation*}

\subsection{Conditions equivalent to the convex log-Sobolev inequality}

Taking into account the results from \cite{GRSST15}, we can state a handful of conditions equivalent to the log-Sobolev inequality on the real line. For simplicity we work with the quadratic cost.

\begin{theorem}\label{thm:summary}
Let $\theta(t) = t^2$ for $t\geq 0$. For a probability measure $\mu$ on the real line the following conditions are equivalent.
\begin{enumerate}[(i)]
\item For every $s>0$ we have $\int_{\RR} e^{s|x|} d\mu(x)< \infty$ and there exists $C>0$ such that
\begin{equation*}
\Ent_{\mu}(e^{\varphi}) \leq C\int_{\RR} |\varphi'|^2e^{\varphi} d\mu
\end{equation*}
 for every smooth convex Lipschitz function $\varphi:\RR\to\RR$.
\item There exists $a, b>0$ such that for all $h>0$,
\begin{equation*}
\Delta_\mu(h)\leq \sqrt{a + bh}.
\end{equation*}
\item There exists $a_1>0$ such that $\mu$ satisfies the inequality $\Tbarbf^-(\theta(a_1\cdot))$.
\item There exists $a_2>0$ such that $\mu$ satisfies the inequality $\Tbarbf(\theta(a_2\cdot))$.
\item There exists $t>0$ such that $\mu$ satisfies the infimum convolution inequality
\begin{equation*}
\int_{\RR} \exp(Q_t^{\theta} f) d\mu  \int_{\RR} \exp(-f)\leq 1
\end{equation*}
for every convex function $f:\RR\to\RR$ bounded from below.
\end{enumerate}
In each of the implications the constants in the conclusion depend
only on the constants in the premise.
\end{theorem}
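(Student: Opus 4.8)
\textbf{Proof plan for Theorem~\ref{thm:summary}.} The theorem collects five conditions (i)--(v) in the quadratic case $\theta(t)=t^2$ and asserts their equivalence with quantitative dependence of constants. The strategy is simply to assemble the results already proved in the paper together with the cited results from \cite{GRSST15} and Proposition~\ref{prop:weak-dual}, arranging them into a single cycle of implications (or a web of pairwise equivalences). First observe that (i) $\Leftrightarrow$ (iii) is exactly Proposition~\ref{prop:tr-LSI} specialized to $H(x)=\tfrac14 x^2$ (so $H^*(x)=x^2$, $A=1$, $\alpha=2$); the exponential integrability clause appears on both sides, and the dependence of constants is the explicit one recorded there ($c=2/a_1$ one way, $a_1=c^{-1}$ the other). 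Next, (iii) $\Leftrightarrow$ (ii) is Proposition~\ref{prop:tr-U} with $\theta(t)=t^2$: condition (ii) there reads $\Delta_\mu(h)\leq \tfrac1b\theta^{-1}(t_0^2+h)=\tfrac1b\sqrt{t_0^2+h}$, which is precisely the form $\sqrt{a+bh}$ after renaming constants (here one should note that the particular value of $t_0>0$ is irrelevant since $\theta(t)=t^2$ for all $t\geq 0$, so the hypothesis ``$\theta(t)=t^2$ on $[0,t_0]$'' holds for every $t_0$, and absorbing $t_0^2/b^2$ into the additive constant $a$ gives the stated equivalence).

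For the remaining conditions: (iii) $\Rightarrow$ (iv), i.e.\ $\Tbarbf^-(\theta(a_1\cdot))$ implies $\Tbarbf(\theta(a_2\cdot))$ for some $a_2$, follows by going around the loop --- from (iii) deduce (ii), and then apply the implication (ii) $\Rightarrow$ (i) of Proposition~\ref{prop:tr-U-GRSST15} (the \cite{GRSST15} result), which yields $\Tbarbf(\theta(a\cdot))$ with $a=\kappa_2 b$. The converse (iv) $\Rightarrow$ (iii) is trivial since $\Tbarbf(\theta(a_2\cdot))$ includes $\Tbarbf^-(\theta(a_2\cdot))$ by definition. Finally, (iv) $\Leftrightarrow$ (v) is Proposition~\ref{prop:weak-dual}~(c): if $\mu$ satisfies $\Tbarbf(\theta)$ then the infimum convolution inequality holds with $t=2$, and conversely if it holds for some $t>0$ then $\mu$ satisfies $\Tbarbf(t\theta(\cdot/t))$, which for the quadratic cost is again $\Tbarbf(\theta(\cdot/t))$ up to the scaling $\theta(\cdot/t)=t^{-2}\theta(\cdot)$, i.e.\ of the form $\Tbarbf(\theta(a_2\cdot))$.

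Putting these together gives a closed chain, e.g.\ (i) $\Rightarrow$ (iii) $\Rightarrow$ (ii) $\Rightarrow$ (iv) $\Rightarrow$ (v) $\Rightarrow$ (iv) $\Rightarrow$ (iii) $\Rightarrow$ (i), and every arrow carries an explicit control of constants, so the final sentence of the theorem (``the constants in the conclusion depend only on the constants in the premise'') is automatic. The only genuine work, and the place where a careless write-up could go wrong, is the bookkeeping in passing between the form $\tfrac1b\sqrt{t_0^2+h}$ and $\sqrt{a+bh}$ and between the various scalings $\theta(a\cdot)$ versus $t\theta(\cdot/t)$ in the quadratic case; none of this is hard, but one must be consistent about whether a constant multiplies the argument of $\theta$ or sits outside. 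I do not anticipate any conceptual obstacle: the theorem is, by design, a summary, and its proof is a one-paragraph assembly of Propositions~\ref{prop:tr-LSI}, \ref{prop:tr-U}, \ref{prop:tr-U-GRSST15}, and~\ref{prop:weak-dual}.
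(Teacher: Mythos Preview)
Your proposal is correct and is precisely the assembly the paper has in mind: Theorem~\ref{thm:summary} is stated as a summary without explicit proof, and the equivalences follow exactly as you describe from Propositions~\ref{prop:tr-LSI}, \ref{prop:tr-U}, \ref{prop:tr-U-GRSST15}, and~\ref{prop:weak-dual}(c). One small correction: the exponential integrability clause does not ``appear on both sides'' of (i)~$\Leftrightarrow$~(iii); it appears only in (i), and in the direction (iii)~$\Rightarrow$~(i) it is deduced from $\Tbarbf^-$ (as noted in the proof of Proposition~\ref{prop:tr-LSI}).
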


With this theorem we can easily give the proof of the concentration inequalities from Corollary~\ref{cor:concentration-x^2}.

\begin{proof}[Proof of Corollary~\ref{cor:concentration-x^2}]
By Theorem~\ref{thm:summary} the measure $\mu$ satisfies the inequality $\Tbarbf$ for the quadratic cost. An application of e.g. \cite[Corollary~5.11]{GRST14} completes the proof. Alternatively, for a more self-contained reasoning, one can use item (v) of the above theorem and adapt the approach of \cite{Mau91}.
\end{proof}

\subsection{Relation to Talagrand's inequality}
Let $\theta(t) =t^2$ for $t \geq 0$. We say that a~probability measure $\mu$ on the real line satisfies Talagrand's inequality (with constant $C$) if
\begin{equation*}
\mathcal{T}_{\theta}(\mu, \nu) \leq C H(\nu|\mu)
\end{equation*}
for every probability measure $\nu$, where $\mathcal{T}_{\theta}$  was defined in~\eqref{eq:calT}.
Recall that in the classical setting of smooth functions we have the implication chain
\begin{equation*}
\text{log-Sobolev inequality } \implies \text{ Talagrand's inequality } \implies \text{ Poincar\'e inequality }
\end{equation*}
and these implications are strict (see \cite[Section~4.3]{GozW2} for a nice discussion of counterexamples). From \cite{GRSST15} we also know that Talagrand's inequality is strictly stronger than the convex log-Sobolev inequality. The following corollary explains what additional information is carried by the Talagrand inequality.
It is an immediate consequence of Theorem~\ref{thm:main} and \cite[Theorem~1.1]{GozW2}.

\begin{corollary} A probability measure $\mu$ on the real line satisfies the Talagrand inequality if and only if it satisfies the Poincar\'e inequality for smooth functions and the log-Sobolev inequality for convex functions.
\end{corollary}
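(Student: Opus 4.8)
The plan is to deduce this corollary by feeding the characterization of the convex log-Sobolev inequality from Theorem~\ref{thm:main} (in the quadratic case; equivalently, the equivalence (i)~$\Leftrightarrow$~(iv) of Theorem~\ref{thm:summary}) into the characterization of Talagrand's $T_2$ inequality on the real line provided by \cite[Theorem~1.1]{GozW2}. The latter identifies the probability measures $\mu$ on $\RR$ which satisfy $\mathcal{T}_\theta(\mu,\nu)\leq CH(\nu|\mu)$ for some $C<\infty$ and all $\nu$ as precisely those which \emph{simultaneously} satisfy the Poincar\'e inequality for smooth functions and the weak (barycentric) transport-entropy inequality $\Tbarbf(\theta(a\cdot))$ for some $a>0$. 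By Theorem~\ref{thm:summary} the convex log-Sobolev inequality is itself equivalent to $\Tbarbf(\theta(a_2\cdot))$ for some $a_2>0$, so the two characterizations dovetail and the corollary follows; it remains only to chain the equivalences in both directions.

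If $\mu$ satisfies the Poincar\'e inequality for smooth functions and the convex log-Sobolev inequality, then by (i)~$\Rightarrow$~(iv) of Theorem~\ref{thm:summary} it satisfies $\Tbarbf(\theta(a_2\cdot))$ for some $a_2>0$, and \cite[Theorem~1.1]{GozW2} applied to this together with the Poincar\'e inequality gives Talagrand's inequality. Conversely, if $\mu$ satisfies Talagrand's inequality, then it satisfies the Poincar\'e inequality for smooth functions by the classical (dimension-free) implication, and by \cite[Theorem~1.1]{GozW2} it also satisfies $\Tbarbf(\theta(a\cdot))$ for some $a>0$ --- in fact $T_2\Rightarrow\Tbarbf$ directly, since by Jensen's inequality $\calTbar_\theta(\nu|\mu)\le\mathcal{T}_\theta(\nu,\mu)$ and $\calTbar_\theta(\mu|\nu)\le\mathcal{T}_\theta(\mu,\nu)=\mathcal{T}_\theta(\nu,\mu)$; moreover Talagrand's inequality forces Gaussian concentration, hence finiteness of $\int_\RR e^{s|x|}\,d\mu(x)$ for every $s>0$. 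Thus $\mu$ satisfies condition (iv) of Theorem~\ref{thm:summary}, and by (iv)~$\Rightarrow$~(i) it satisfies the convex log-Sobolev inequality.

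I do not anticipate a serious obstacle: the substance is entirely contained in Theorem~\ref{thm:main} and \cite[Theorem~1.1]{GozW2}, and the remaining work is purely bookkeeping --- absorbing the scaling constants in front of $\theta$ into the quantifier ``there exists $a>0$'' and matching the precise form of the characterizing condition used in \cite[Theorem~1.1]{GozW2} with the corresponding equivalent reformulation of the convex log-Sobolev inequality (whether in terms of $\Tbarbf$ or of the modulus $\Delta_\mu$) supplied by Theorem~\ref{thm:main} and Theorem~\ref{thm:summary}. This matching is automatic, since the convex log-Sobolev inequality is equivalent to each of $\Tbarbf^-(\theta(a_1\cdot))$, $\Tbarbf(\theta(a_2\cdot))$, and the bound $\Delta_\mu(h)\le\sqrt{a+bh}$, so any one of these versions will serve.
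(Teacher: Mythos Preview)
Your proposal is correct and follows exactly the paper's own route: the corollary is stated in the paper as ``an immediate consequence of Theorem~\ref{thm:main} and \cite[Theorem~1.1]{GozW2},'' and your argument simply spells out the two directions of that immediate consequence. Your closing paragraph already anticipates the only bookkeeping needed, namely that the characterizing condition in \cite[Theorem~1.1]{GozW2} (which is phrased there in terms of Poincar\'e together with a condition on $\Delta_\mu$, not directly in terms of $\Tbarbf$) matches one of the equivalent formulations provided by Theorem~\ref{thm:main}/Theorem~\ref{thm:summary}.
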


\subsection{Open questions}

We conclude with three open questions, which to the best of our knowledge are open even in the case $\theta(t) = t^2$. They address the possibility of extending some of the results of this paper.

Roughly speaking, Theorem~\ref{thm:main} implies that if a measure $\mu$ on the real line satisfies the log-Sobolev inequality for convex functions (i.e. the inequality  $\Tbarbf^-$), then it also satisfies the log-Sobolev inequality for (almost all) concave functions (i.e. the inequality  $\Tbarbf^+$). Does the reverse implication hold true? Which of those implications hold in a higher dimensional space?

In terms of the weak transport-entropy inequalities the above questions read:
\begin{enumerate}
\item[1.] Suppose that a probability measure $\mu$ on $\RR^n$, $n\geq 2$, satisfies the inequality $\Tbarbf^-(\theta(a\cdot))$ for some $a>0$. Does it satisfy the inequality $\Tbarbf(\theta(a' \cdot))$ for some $a'>0$?
\item[2.] Suppose that a probability measure $\mu$ on the real line satisfies the inequality $\Tbarbf^+(\theta(a\cdot))$ for some $a>0$. Does it satisfy the inequality $\Tbarbf^-(\theta(a' \cdot))$, and thus $\Tbarbf(\theta(\min\{a',a\}\cdot ))$, for some $a'>0$?
\end{enumerate}
We refer to \cite[Theorem~8.15]{GRST14} and~\cite[Remark~8.18]{GRST14} for details and subtleties concerning the log-Sobolev inequality for concave functions.

The last question is deliberately somewhat vague. An intuitive way to understand Theorem~\ref{thm:main} is the following: on the real line one can obtain a measure which satisfies the convex log-Sobolev inequality by a local perturbation of another measure which satisfies this inequality. Now take a measure satisfying the convex log-Sobolev inequality in $\RR^n$ and rearrange the mass locally. Does the convex log-Sobolev inequality still hold? More generally, suppose that a probability measure $\mu$ on $\RR^n$, $n\geq 2$, satisfies the convex log-Sobolev inequality. Can one state this fact with a condition which is not expressed in terms of quantifiers ranging over families of functions or measures (like e.g. the conditions in Section~\ref{sec:LSI-tr}) but rather in terms of the measure $\mu$ itself (like the condition (ii) in Theorem~\ref{thm:main})? To put it briefly:
\begin{enumerate}
\item[3.] Find an intrinsic characterization of probability measures on $\RR^n$, $n\geq 2$, which satisfy the convex log-Sobolev inequality.
\end{enumerate}

\section*{Acknowledgments}

The authors thank Rados{\l}aw Adamczak for numerous conversations and many helpful comments, especially for pointing out that the approach of~\cite{Mau91} can be adapted to the setting of Lemma~\ref{lem:tech1}.

\bibliographystyle{amsplain}	
\bibliography{convex-log-sob-arxiv}

\end{document}